\newtheorem{theorem}{Theorem}
\newtheorem{corollary}{Corollary}
\newtheorem{lemma}{Lemma}
\newtheorem{remark}{Remark}
\newenvironment{proof}[1][Proof]{\noindent\textbf{#1.} }{\ \rule{0.5em}{0.5em}}
\begin{document}

\title{\textbf{Pointwise analog of the Ste\v{c}kin approximation theorem}}
\author{\textbf{W\l odzimierz \L enski } \\
University of Zielona G\'{o}ra\\
Faculty of Mathematics, Computer Science and Econometrics\\
65-516 Zielona G\'{o}ra, ul. Szafrana 4a\\
P O L A N D\\
W.Lenski@wmie.uz.zgora.pl }
\date{}
\maketitle

\begin{abstract}
We show the pointwise version of the Ste\v{c}kin theorem on approximation by
de la Vall\'{e}e-Poussin means. The result on norm approximation is also
derived.

\ \ \ \ \ \ \ \ \ \ \ \ \ \ \ \ \ \ \ \ 

\textbf{Key words: }Pointwise\textbf{\ }approximation by de la Vall\'{e}%
e-Poussin means

\ \ \ \ \ \ \ \ \ \ \ \ \ \ \ \ \ \ \ 

\textbf{2000 Mathematics Subject Classification: }42A24,
\end{abstract}

\newpage 

\section{Introduction}

Let $L^{p}\ (1\leq p<\infty )\ \left[ C\right] $ be the class of all $2\pi $%
--periodic real--valued functions integrable in the Lebesgue sense with $p$%
--th power $\left[ \text{continuous}\right] $ over $Q=$ $[-\pi ,\pi ]$ and
let $X^{p}=L^{p}$ when $1\leq p<\infty $ or $X^{p}=C$ when $p=\infty $.

Let us define the norms of $f\in X^{p}$ as\ \ \ \ 
\begin{equation*}
\Vert f\Vert =\Vert f\Vert _{_{X^{p}}}=\Vert f(\cdot )\Vert
_{_{X^{p}}}:=\left\{ 
\begin{array}{ll}
\left\{ \int_{_{_{Q}}}\mid f(x)\mid ^{p}dx\right\} ^{\frac{1}{p}} & \text{%
when }1\leq p<\infty \  \\ 
\sup_{x\in Q}\mid f(x)\mid  & \text{when \ }p=\infty 
\end{array}%
\right. 
\end{equation*}%
and%
\begin{eqnarray*}
\Vert f\Vert _{x,\delta } &=&\Vert f\Vert _{X^{p},x,\delta }=\Vert f(\cdot
)\Vert _{X^{p},x,\delta }:=\sup\limits_{0<h\leq \delta }\Vert f(\cdot )\Vert
_{X^{p},x,h}^{\circ } \\
&:&=\left\{ 
\begin{array}{ll}
\sup\limits_{0<h\leq \delta }\left\{ \frac{1}{2h}\int\limits_{x-h}^{x+h}\mid
f(t)\mid ^{p}dt\right\} ^{\frac{1}{p}} & \text{when }1\leq p<\infty \  \\ 
\sup\limits_{0<h\leq \delta }\left\{ \sup\limits_{0<\left\vert t\right\vert
\leq h}\mid f(x+t)\mid \right\}  & \text{when \ }p=\infty 
\end{array}%
\right. 
\end{eqnarray*}%
where%
\begin{eqnarray*}
\Vert f\Vert _{x,\delta }^{\circ } &=&\Vert f\Vert _{X^{p},x,\delta }^{\circ
}=\Vert f(\cdot )\Vert _{X^{p},x,\delta }^{\circ } \\
&:&=\left\{ 
\begin{array}{ll}
\left\{ \frac{1}{2h}\int\limits_{x-\delta }^{x+\delta }\mid f(t)\mid
^{p}dt\right\} ^{\frac{1}{p}} & \text{when }1\leq p<\infty  \\ 
\sup\limits_{0<\left\vert t\right\vert \leq h}\mid f(x+h)\mid  & \text{when
\ }p=\infty 
\end{array}%
\right. \text{ \ \ \ }\left( \delta >0\right) .
\end{eqnarray*}%
We note additionally that 
\begin{equation*}
\Vert f\Vert _{X,x,0}=\Vert f\Vert _{X,x,0}^{\circ }=\mid f(x)\mid .
\end{equation*}%
Consider the trigonometric Fourier series of $f$ 
\begin{equation*}
Sf(x)=\frac{a_{o}(f)}{2}+\sum_{k=0}^{\infty }(a_{k}(f)\cos kx+b_{k}(f)\sin
kx)
\end{equation*}%
with the partial sums \ $S_{k}f$\ \ .

Let 
\begin{equation*}
\sigma _{n,m}f\left( x\right) :=\frac{1}{m+1}\sum_{k=n-m}^{n}S_{k}f\left(
x\right) \text{ \ \ \ }\left( m\leq n=0,1,2,...\right)
\end{equation*}

As a measure of approximation by the above quantities we use the pointwise
characteristics%
\begin{eqnarray*}
w_{x}f(\delta ) &=&w_{x}f(\delta )_{X^{p}}:=\Vert \Delta _{x}f\left( \cdot
\right) )\Vert _{X^{p},x,\delta } \\
&=&\left\{ 
\begin{array}{ll}
\sup\limits_{0<h\leq \delta }\left\{ \frac{1}{2h}\int_{-h}^{h}\left\vert
\Delta _{x}f\left( t\right) \right\vert ^{p}dt\right\} ^{1/p} & \text{when }%
1\leq p<\infty  \\ 
\sup\limits_{0<h\leq \delta }\left\{ \sup\limits_{0<\left\vert t\right\vert
\leq h}\mid \Delta _{x}f\left( t\right) \mid \right\}  & \text{when \ }%
p=\infty 
\end{array}%
\right. 
\end{eqnarray*}%
cf. \cite{ABT}\ and 
\begin{eqnarray*}
w_{x}^{\circ }f(\delta ) &=&w_{x}^{\circ }f(\delta )_{X^{p}}:=\Vert \Delta
_{x}f\left( \cdot \right) )\Vert _{X^{p},x,\delta }^{\circ } \\
&=&\left\{ 
\begin{array}{ll}
\left\{ \frac{1}{2\delta }\int_{-\delta }^{\delta }\left\vert \Delta
_{x}f\left( t\right) \right\vert ^{p}dt\right\} ^{1/p} & \text{when }1\leq
p<\infty  \\ 
\sup\limits_{0<\left\vert t\right\vert \leq \delta }\mid \Delta _{x}f\left(
t\right) \mid  & \text{when \ }p=\infty 
\end{array}%
\right. 
\end{eqnarray*}%
and also%
\begin{equation*}
\Omega _{x}f\left( \frac{\pi }{n+1}\right) =\Omega _{x}f\left( \frac{\pi }{%
n+1}\right) _{X^{p}}:=\frac{1}{n+1}\sum_{k=0}^{n}w_{x}f(\frac{\pi }{k+1}%
)_{X^{p}}
\end{equation*}%
and%
\begin{equation*}
\Omega _{x}^{\circ }f\left( \frac{\pi }{n+1}\right) =\Omega _{x}^{\circ
}f\left( \frac{\pi }{n+1}\right) _{X^{p}}=\frac{1}{n+1}\sum_{k=0}^{n}w_{x}^{%
\circ }f(\frac{\pi }{k+1})_{X^{p}}\text{ ,}
\end{equation*}%
\begin{equation*}
\text{where \ \ }\Delta _{x}f\left( t\right) :=f\left( x+t\right) -f\left(
x\right) \text{ \ ,}
\end{equation*}%
constructed on the base of definition of $X^{p}-points\ \ (\left[ \text{%
Lebesgue points}(L^{p}-points)\right] $ or \ $\left[ \text{points of
continuity}\left( C-points\right) \right] ).$ We also use the modulus of
continuity of $f$ in the space $X^{p}$ defined by the formula 
\begin{equation*}
\omega f\left( \delta \right) =\omega f\left( \delta \right)
_{X^{p}}:=\sup_{0<\left\vert h\right\vert \leq \delta }\left\Vert \Delta
_{\cdot }f\left( h\right) \right\Vert _{X^{p}}\text{ \ }
\end{equation*}%
and its arithmetic mean%
\begin{equation*}
\Omega f\left( \frac{\pi }{n+1}\right) =\Omega f\left( \frac{\pi }{n+1}%
\right) _{X^{p}}=\frac{1}{n+1}\sum_{k=0}^{n}\omega f(\frac{\pi }{k+1}%
)_{X^{p}}.
\end{equation*}

We can observe that, for $f\in X^{\widetilde{p}}$ and $\widetilde{p}\geq p$\
, 
\begin{equation*}
\Vert w_{\cdot .}f(\delta )_{X^{p}}\Vert _{C}\leq \omega f\left( \delta
\right) _{C},
\end{equation*}%
whence%
\begin{equation*}
\Vert \Omega _{\cdot .}f(\delta )_{X^{p}}\Vert _{C}\leq \Omega f\left(
\delta \right) _{C}
\end{equation*}%
and%
\begin{equation*}
\Vert w_{\cdot .}^{\circ }f(\delta )_{X^{p}}\Vert _{X^{p}}\leq \omega
f\left( \delta \right) _{X^{p}},
\end{equation*}%
whence%
\begin{equation*}
\Vert \Omega _{\cdot .}^{\circ }f(\delta )_{X^{p}}\Vert _{X^{p}}\leq \Omega
f\left( \delta \right) _{X^{p}}.
\end{equation*}

Let introduce one more measure of poitwise approximation analogical to the
best approximation of function \ $f$ by trigonometric polynomials $T$\ \ of
the degree at most $n$ $\left( T\in H_{n}\right) $%
\begin{equation*}
E_{n}\left( f\right) _{X^{p}}:=\inf_{T\in H_{n}}\left\{ \left\Vert f\left(
\cdot \right) -T\left( \cdot \right) \right\Vert _{X^{p}}\right\} ,
\end{equation*}%
namely%
\begin{equation*}
E_{n}\left( f,x;\delta \right) =E_{n}\left( f,x;\delta \right)
_{X^{p}}:=\inf_{T\in H_{n}}\left\{ \left\Vert f\left( \cdot \right) -T\left(
\cdot \right) \right\Vert _{X^{p},x,\delta }\right\} 
\end{equation*}%
\begin{equation*}
=\left\{ 
\begin{array}{ll}
\inf\limits_{T\in H_{n}}\left\{ \sup\limits_{0<h\leq \delta }\left[ \frac{1}{%
2h}\int_{-h}^{h}\left\vert f\left( x+t\right) -T\left( x+t\right)
\right\vert ^{p}dt\right] ^{\frac{1}{p}}\right\}  & \text{when }1\leq
p<\infty  \\ 
\inf\limits_{T\in H_{n}}\left\{ \sup\limits_{0<\left\vert h\right\vert \leq
\delta }\mid f(x+h)-T\left( x+h\right) )\mid \right\}  & \text{when \ }%
p=\infty 
\end{array}%
\right. 
\end{equation*}%
and%
\begin{equation*}
E_{n}^{\circ }\left( f,x;\delta \right) =E_{n}^{\circ }\left( f,x;\delta
\right) _{X^{p}}:=\inf_{T\in H_{n}}\left\{ \left\Vert f\left( \cdot \right)
-T\left( \cdot \right) \right\Vert _{X^{p},x,\delta }^{\circ }\right\} .
\end{equation*}%
We will also use its arithmetic mean%
\begin{equation*}
F_{n,m}\left( f,x\right) =F_{n,m}\left( f,x\right) _{X}:=\frac{1}{m+1}%
\sum_{k=0}^{m}E_{n}\left( f,x;\frac{\pi }{k+1}\right) _{X}
\end{equation*}%
and%
\begin{equation*}
F_{n,m}^{\circ }\left( f,x\right) =F_{n,m}^{\circ }\left( f,x\right)
_{X^{p}}:=\frac{1}{m+1}\sum_{k=0}^{m}E_{n}^{\circ }\left( f,x;\frac{\pi }{k+1%
}\right) _{X^{p}}\text{ .}
\end{equation*}

Denote also 
\begin{equation*}
X^{p}\left( w_{x}\right) =\left\{ f\in X^{p}:w_{x}f\left( \delta \right)
\leq w_{x}\left( \delta \right) \right\} ,
\end{equation*}%
where $\ w_{x}$ is a function of modulus of continuity type on the interval $%
[0,+\infty ),$ i.e. a nondecreasing continuous function having the following
properties: \ $w_{x}\left( 0\right) =0,$ \ $w_{x}\left( \delta _{1}+\delta
_{2}\right) \leq w_{x}\left( \delta _{1}\right) +w_{x}\left( \delta
_{2}\right) $ \ for any \ \ $0\leq \delta _{1}\leq \delta _{2}\leq \delta
_{1}+\delta _{2}$.

Using these characteristics we will show the pointwise version of the Ste%
\v{c}kin \cite{S} generalization of the Fej\'{e}r-Lebesgue theorem. As a
corollaries we will obtain the mentioned original result of S. B. Ste\v{c}%
kin on norm approximation as well the result of N. Tanovi\'{c}-Miller \cite%
{TM}.

By $K$ we shall designate either an absolute constant or a constant
depending on some parameters, not necessarily the same of each occurrence.

\section{Statement of the results}

At the begin we formulate the partial solution of the considered problem.

\begin{theorem}
\cite{WL}If \ $f\in X^{p}$\ then, for any positive integer\ $m\leq n$ and
all real $x,$%
\begin{eqnarray*}
\left\vert \sigma _{n,m}f\left( x\right) -f\left( x\right) \right\vert &\leq
&\pi ^{2}E_{n-m}^{\circ }(f,x,\frac{\pi }{2n-m+1})_{X}+6F_{n-m,m}^{\circ
}\left( f,x\right) _{X} \\
&&+\int_{\frac{\pi }{m+1}}^{\frac{\pi }{2n-m+1}}\frac{E_{n-m}^{\circ
}(f,x,t)_{X}}{t}dt+E_{n-m}^{\circ }\left( f,x;0\right) _{X^{p}}
\end{eqnarray*}%
and%
\begin{eqnarray*}
\left\vert \sigma _{n,m}f\left( x\right) -f\left( x\right) \right\vert &\leq
&\left( 6\text{ }+\pi ^{2}\right) F_{n-m,m}\left( f,x\right) _{X^{p}}\left[
1+\ln \frac{n+1}{m+1}\right] \\
&&+E_{n-m}\left( f,x;0\right) _{X^{p}}.
\end{eqnarray*}
\end{theorem}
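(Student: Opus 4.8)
The plan is to realize $\sigma_{n,m}$ as convolution against the de la Vall\'{e}e-Poussin kernel and to exploit that this operator reproduces $H_{n-m}$. Summing the Dirichlet kernels gives $\sigma_{n,m}f(x)=\frac{1}{\pi}\int_{-\pi}^{\pi}f(x+t)V_{n,m}(t)\,dt$ with
\[ V_{n,m}(t)=\frac{1}{m+1}\sum_{k=n-m}^{n}D_{k}(t)=\frac{\sin\frac{(2n-m+1)t}{2}\,\sin\frac{(m+1)t}{2}}{2(m+1)\sin^{2}\frac{t}{2}}, \]
which is even, satisfies $\frac{1}{\pi}\int_{-\pi}^{\pi}V_{n,m}=1$, and fixes every $T\in H_{n-m}$ (since $S_{k}T=T$ for $k\geq n-m$). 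Fixing an arbitrary $T\in H_{n-m}$ and writing $g=f-T$, I get the exact error formula
\[ \sigma_{n,m}f(x)-f(x)=\sigma_{n,m}g(x)-g(x)=\frac{1}{\pi}\int_{0}^{\pi}\bigl[\Delta_{x}g(t)+\Delta_{x}g(-t)\bigr]V_{n,m}(t)\,dt. \]

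First I would record three kernel bounds coming from $\sin\frac{t}{2}\geq\frac{t}{\pi}$: near the origin both sine factors are linear, so $|V_{n,m}(t)|\leq\frac{\pi^{2}}{8}(2n-m+1)$; keeping only $|\sin\frac{(2n-m+1)t}{2}|\leq 1$ gives $|V_{n,m}(t)|\leq\frac{\pi^{2}}{4t}$; keeping in addition $|\sin\frac{(m+1)t}{2}|\leq 1$ gives $|V_{n,m}(t)|\leq\frac{\pi^{2}}{2(m+1)t^{2}}$. These dictate splitting $\int_{0}^{\pi}$ at the two natural frequencies $\frac{\pi}{2n-m+1}\leq\frac{\pi}{m+1}$ into the ranges $[0,\frac{\pi}{2n-m+1}]$, $[\frac{\pi}{2n-m+1},\frac{\pi}{m+1}]$, $[\frac{\pi}{m+1},\pi]$. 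On the first range H\"{o}lder's inequality together with $\int_{-\delta}^{\delta}|\Delta_{x}g|\leq 2\delta\,w_{x}^{\circ}g(\delta)$ collapses the factor $(2n-m+1)$ against the length of the interval and produces $\pi^{2}\|g\|_{x,\pi/(2n-m+1)}^{\circ}$. On the middle range the bound $\frac{\pi^{2}}{4t}$ yields $\frac{\pi}{4}\int_{\pi/(2n-m+1)}^{\pi/(m+1)}\frac{|\Delta_{x}g(t)|+|\Delta_{x}g(-t)|}{t}\,dt$, which I would convert into $\int\frac{\|g\|_{x,t}^{\circ}}{t}\,dt$.

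On the far range I would use $|V_{n,m}(t)|\leq\frac{\pi^{2}}{2(m+1)t^{2}}$, partition $[\frac{\pi}{m+1},\pi]$ along the points $\frac{\pi}{k+1}$ $(k=1,\dots,m)$, and apply summation by parts: writing $\phi(u)=\int_{0}^{u}|\Delta_{x}g|\leq 2^{1/p}u\,w_{x}^{\circ}g(u)$, the weights $\frac{(k+1)^{2}}{\pi^{2}}$ have increments $\sim\frac{2k}{\pi^{2}}$, so $\frac{1}{m+1}\sum_{k}\frac{(k+1)^{2}}{\pi^{2}}\bigl[\phi(\tfrac{\pi}{k})-\phi(\tfrac{\pi}{k+1})\bigr]$ becomes $\sim\frac{1}{m+1}\sum_{k}w_{x}^{\circ}g(\tfrac{\pi}{k+1})$, the averaged term $6F_{n-m,m}^{\circ}$. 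The leftover constant $-g(x)$ contributes $|g(x)|=\|g\|_{x,0}^{\circ}$. Taking $T$ to be a near-best approximant and replacing each $\|g\|_{x,\delta}^{\circ}$ by $E_{n-m}^{\circ}(f,x;\delta)$ and $|g(x)|$ by $E_{n-m}^{\circ}(f,x;0)$ gives the first inequality; the second follows by monotonicity of $E_{n-m}^{\circ}(f,x;\cdot)$, bounding the local terms by the arithmetic mean $F_{n-m,m}$ and evaluating $\int_{\pi/(2n-m+1)}^{\pi/(m+1)}\frac{dt}{t}=\ln\frac{2n-m+1}{m+1}\leq\ln 2+\ln\frac{n+1}{m+1}$, which collects everything into $(6+\pi^{2})F_{n-m,m}[1+\ln\frac{n+1}{m+1}]$.

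The main obstacle I anticipate is the middle-range conversion of the pointwise integrand $\frac{|\Delta_{x}g(t)|}{t}$ into the averaged modulus $\frac{\|g\|_{x,t}^{\circ}}{t}$ when $1\leq p<\infty$: unlike $p=\infty$, where $|\Delta_{x}g(t)|\leq w_{x}^{\circ}g(t)$ is immediate, for finite $p$ one only controls $L^{p}$-averages of $|\Delta_{x}g|$, so a Hardy-type averaging argument (or a summation by parts over a geometric partition of the range) is needed to pass to the local means without losing logarithmic sharpness. A secondary subtlety is that all three ranges must be treated with one and the same polynomial $T$; I would therefore fix $T$ as a global near-best approximant in $H_{n-m}$ and use the monotonicity of $\|g\|_{x,\cdot}^{\circ}$ to reach the infimum-defined quantities $E_{n-m}^{\circ}$ and $F_{n-m,m}^{\circ}$.
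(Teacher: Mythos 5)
Note first that the paper itself contains no proof of this statement: Theorem~1 is quoted from \cite{WL}, so there is nothing in-paper to compare against line by line. Your outline is nonetheless the expected route, and it coincides with the technique the author does display in the proof of Lemma~3: write $\sigma_{n,m}$ as convolution with $V_{n,m}(t)=\frac{\sin\frac{(2n-m+1)t}{2}\sin\frac{(m+1)t}{2}}{2(m+1)\sin^{2}\frac{t}{2}}$, which reproduces $H_{n-m}$; use the three bounds $|V_{n,m}(t)|\leq\frac{\pi^{2}}{8}(2n-m+1)$, $\frac{\pi^{2}}{4t}$, $\frac{\pi^{2}}{2(m+1)t^{2}}$ on $[0,\frac{\pi}{2n-m+1}]$, $[\frac{\pi}{2n-m+1},\frac{\pi}{m+1}]$, $[\frac{\pi}{m+1},\pi]$; and convert the resulting integrals of $|f-T_{n-m}|$ into the local means $E_{n-m}^{\circ}(f,x;\cdot)$ by the same integration by parts that appears in the estimates of $\Sigma_{2}$ and $\Sigma_{3}$ there (your ``main obstacle'' for $1\leq p<\infty$ is resolved exactly that way, so it is not a real obstacle). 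Your ``secondary subtlety'' is settled by the paper's Lemma~1: the global best approximant $T_{n-m}$ is simultaneously the best local approximant for every $\delta$, so one polynomial serves all three ranges and all the infima $E_{n-m}^{\circ}$ at once. (You also silently corrected the orientation of the integration limits in the middle term, which are reversed in the statement as printed.)

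One bookkeeping point needs fixing. If you carry $\Delta_{x}g(t)=g(x+t)-g(x)$ through the middle range, the constant $|g(x)|$ is integrated against $\frac{\pi^{2}}{4t}$ over $[\frac{\pi}{2n-m+1},\frac{\pi}{m+1}]$ and so acquires the factor $\ln\frac{2n-m+1}{m+1}$; that is incompatible with the single, coefficient-one summand $E_{n-m}^{\circ}(f,x;0)$ in the first (log-free) inequality, and is why the stated bound is phrased in terms of $E_{n-m}^{\circ}$ (local norms of $g$ itself) rather than of moduli of $\Delta_{x}g$. The remedy is to write the error as $\frac{1}{\pi}\int_{-\pi}^{\pi}g(x+t)V_{n,m}(t)\,dt-g(x)$, estimate the integral of $|g(x+t)|\,|V_{n,m}(t)|$ by the three-range splitting directly in terms of $\|g\|_{X^{p},x,\delta}^{\circ}$, and account for $|g(x)|=E_{n-m}^{\circ}(f,x;0)$ exactly once. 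With that adjustment your argument delivers the first inequality, and the second follows as you say from $E^{\circ}\leq E$, the monotonicity of $E_{n-m}(f,x;\cdot)$, the bound $E_{n-m}(f,x;\frac{\pi}{m+1})\leq F_{n-m,m}(f,x)$, and $\ln\frac{2n-m+1}{m+1}\leq\ln 2+\ln\frac{n+1}{m+1}$.
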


Now, we can present the main result on pointwise approximation.

\begin{theorem}
If \ $f\in X^{p}$\ \ then, for any positive integer \ $m\leq n$ and all real 
$x,$%
\begin{equation*}
\left\vert \sigma _{n,m}f\left( x\right) -f\left( x\right) \right\vert \leq
K\sum_{\nu =0}^{n}\frac{F_{n-m+\nu ,m}\left( f,x\right) _{X^{p}}+F_{n-m+\nu
,\nu }\left( f,x\right) _{X^{p}}}{m+\nu +1}+E_{2n}\left( f,x;0\right)
_{X^{p}}.
\end{equation*}
\end{theorem}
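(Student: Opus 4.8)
The plan is to read Theorem 2 as the Ste\v{c}kin sharpening of the second inequality in Theorem 1. There the logarithmic factor $1+\ln\frac{n+1}{m+1}$ arises by estimating the term $\int \frac{E^{\circ}_{n-m}(f,x,t)}{t}\,dt$ crudely: one freezes the degree at $n-m$ and pulls $F_{n-m,m}$ out as a constant, whereupon the remaining $\int \frac{dt}{t}$ gives the logarithm. The whole point now is to keep the best approximation \emph{inside} the integral and let its degree grow with the scale. So first I would record the integral representation $\sigma_{n,m}f(x)-f(x)=\frac{1}{2\pi}\int_{-\pi}^{\pi}\Delta_x f(t)\,K_{n,m}(t)\,dt$, where $K_{n,m}=\frac{1}{m+1}\sum_{k=n-m}^{n}D_k$ is the de la Vall\'ee--Poussin kernel and $D_k$ the Dirichlet kernel; since $\sigma_{n,m}$ reproduces every $T\in H_{n-m}$, the same representation holds with $\Delta_x f$ replaced by $\Delta_x(f-T)$ for any $T\in H_{n-m}$, and more generally $\sigma_{n+\nu,m}$ reproduces $H_{n-m+\nu}$.

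Next I would cut the $t$--range into the rings $R_\nu=\{\,\pi/(m+\nu+2)<|t|\le \pi/(m+\nu+1)\,\}$ for $\nu=0,1,\dots$, so that $\int_{R_\nu}\frac{dt}{|t|}\asymp \frac{1}{m+\nu+1}$; this is exactly where the weights $1/(m+\nu+1)$ of the target sum originate. On $R_\nu$ I would no longer use a polynomial of the fixed degree $n-m$, but a near--best polynomial of degree $n-m+\nu$ adapted to the scale $\pi/(m+\nu+1)$, so that $\Delta_x(f-T)$ on $R_\nu$ is measured by $E^{\circ}_{n-m+\nu}(f,x;\pi/(m+\nu+1))$. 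Combining this with the two standard kernel bounds $|K_{n,m}(t)|\le K(n+1)$ and $|K_{n,m}(t)|\le K/((m+1)t^2)$ and applying H\"older on each ring, the contribution of $R_\nu$ is at most $K\,E^{\circ}_{n-m+\nu}(f,x;\pi/(m+\nu+1))/(m+\nu+1)$. Summing in $\nu$ and passing from the single--scale $E^{\circ}$ to the averaged characteristics $F$ by the subadditivity and monotonicity of the moduli $w_x$ then yields a bound of the shape $K\sum_\nu \frac{F_{n-m+\nu,m}+F_{n-m+\nu,\nu}}{m+\nu+1}$; the two families reflect the two regimes of the kernel, the factor $m+1$ in the tail estimate producing the averaging index $m$, and the ring index producing the averaging index $\nu$.

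The finest scales, where $m+\nu+1$ reaches the order $2n$, are treated separately: there the kernel is bounded by $K(n+1)$, the ring has length $\asymp 1/n$, and after reproduction by a polynomial of degree $\asymp 2n$ the residual is the pointwise quantity $E_{2n}(f,x;0)=\inf_{T\in H_{2n}}|f(x)-T(x)|$, the last term of the statement. An equivalent route, useful to cross--check constants, is a telescoping of the approximant: writing $f-T_{n-m}=\sum_\nu (T_{n-m+\nu+1}-T_{n-m+\nu})+(f-T_{2n})$ with $T_N$ near--best of degree $N$, applying $\sigma_{n,m}-\mathrm{id}$ to each polynomial block $T_{n-m+\nu+1}-T_{n-m+\nu}\in H_{n-m+\nu+1}$ and estimating each block through Theorem 1, and finally evaluating the block $f-T_{2n}$ at the point $x$.

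The main obstacle I anticipate is the simultaneous bookkeeping on each ring of three things: the kernel decay, the correct weight $1/(m+\nu+1)$, and the genuinely increasing degree $n-m+\nu$. In particular one must ensure that the degree is allowed to grow across the scales rather than being frozen at $n-m$ as in the derivation of Theorem 1, since otherwise one recovers only the logarithmic bound. Verifying that the tail and the near rings produce precisely the two averaging indices $m$ and $\nu$ carrying the common weight, and that the transition from $E^{\circ}$ to $F$ via subadditivity costs only an absolute constant $K$, are the remaining points; once these are in place, the summation of the weights and the collection of the residual $E_{2n}(f,x;0)$ are routine.
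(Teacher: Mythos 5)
There is a genuine gap, and it sits at the core of your plan. First, the ring decomposition with a scale--dependent polynomial is not available: the identity
\begin{equation*}
\sigma _{n,m}f\left( x\right) -f\left( x\right) =\frac{1}{\pi }\int_{-\pi
}^{\pi }\left[ f\left( x+t\right) -T\left( x+t\right) \right] V_{n,m}\left(
t\right) dt+T\left( x\right) -f\left( x\right)
\end{equation*}
requires one and the same $T\in H_{n-m}$ on the whole interval, because it rests on $\sigma _{n,m}T=T$. If you insert a different $T_{\nu }\in H_{n-m+\nu }$ on each ring $R_{\nu }$, the integrals no longer sum to $\sigma _{n,m}f(x)-f(x)$ and you are left with uncontrolled cross terms $\int_{R_{\nu }}(T_{\nu }-T_{n-m})V_{n,m}$. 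Second, even waiving that, your per--ring bound is off by a factor of $m+\nu +1$: the local characteristics $E_{n}^{\circ }(f,x;h)$ control averages only over full symmetric intervals $[x-h,x+h]$, not over thin annuli, so the best one gets is $\int_{R_{\nu }}|f-T|\,dt\leq \int_{|t|\leq \pi /(m+\nu +1)}|f-T|\,dt\leq \frac{2\pi }{m+\nu +1}E_{n-m}^{\circ }\left( f,x;\frac{\pi }{m+\nu +1}\right) $, and after multiplying by the kernel bound $1/|t|\asymp m+\nu +1$ each ring contributes $KE^{\circ }$, not $KE^{\circ }/(m+\nu +1)$. Summing these is exactly what produces the logarithm of Theorem 1; your scheme therefore reproduces the logarithmic estimate rather than removing it. Your fallback route (telescoping $f-T_{n-m}=\sum_{\nu }(T_{n-m+\nu +1}-T_{n-m+\nu })+(f-T_{2n})$ and applying $\sigma _{n,m}-\mathrm{id}$ blockwise) fails for the same structural reason: $\sigma _{n,m}$ does not reproduce the blocks of degree exceeding $n-m$, the operator norm of $\sigma _{n,m}-\mathrm{id}$ costs a factor $1+\ln \frac{n+1}{m+1}$ per block, and summing $\sim n$ blocks overshoots the target by far.

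What the paper actually does --- and what is missing from your proposal --- is Ste\v{c}kin's adaptive comparison of \emph{operators} rather than of approximants: one builds an increasing sequence $n=n_{0}<n_{1}<\dots <n_{t}$ with $2n\leq n_{t}\leq 2n+m$, chosen so that $F_{n_{s+1}-m,\nu }\leq \frac{1}{2}F_{n_{s}-m,\nu }$ at each step, telescopes $\sigma _{n,m}f-f$ through the differences $\tau _{n_{s},m}f=(m+1)(\sigma _{n_{s}+m+1,m}f-\sigma _{n_{s},m}f)$, and bounds these by Lemmas 3--5 (which in turn use a second, decreasing Ste\v{c}kin sequence $m_{s}$ and Leindler's splitting). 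The kernel difference $V_{n_{s+1},m}-V_{n_{s},m}$ annihilates $H_{n_{s}-m}$, which is the legitimate mechanism by which the degree of the competing polynomial is allowed to grow; and the weights $1/(m+\nu +1)$ emerge only after changing the order of summation and exploiting the geometric decay of $F_{n_{s}-m,\nu }$ along the adaptive sequence. Your proposal contains no substitute for this adaptive selection, and without it the passage from the logarithmic bound to the stated sum cannot be made.
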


This immediately yields the following result of Ste\v{c}kin \cite{S}\ 

\begin{theorem}
If \ $f\in C$\ \ then, for any positive integer \ $n$ \ and \ $m\leq n$ \ 
\begin{equation*}
\left\Vert \sigma _{n,m}f\left( \cdot \right) -f\left( \cdot \right)
\right\Vert _{C}\leq K\sum_{\nu =0}^{n}\frac{E_{n-m+\nu }\left( f\right) _{C}%
}{m+\nu +1}\text{ .}
\end{equation*}
\end{theorem}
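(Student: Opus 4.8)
The plan is to deduce Theorem 3 directly from Theorem 2 by specializing to $p=\infty$ (so that $X^{p}=C$), passing to the supremum over $x$, and carrying out two reductions: first, replacing the pointwise quantities $F_{n-m+\nu ,m}(f,x)_{C}$ and $F_{n-m+\nu ,\nu }(f,x)_{C}$ by the global best approximations $E_{n-m+\nu }(f)_{C}$, and second, showing that the residual term $E_{2n}(f,x;0)_{C}$ vanishes.

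For the first reduction I would observe that, for $p=\infty$, the local seminorm is dominated by the global norm: for any $g\in C$ one has $\|g\|_{C,x,\delta }=\sup_{0<\left\vert t\right\vert \leq \delta }\left\vert g(x+t)\right\vert \leq \sup_{u}\left\vert g(u)\right\vert =\|g\|_{C}$. Applying this with $g=f-T$ and taking the infimum over $T\in H_{n}$ gives $E_{n}(f,x;\delta )_{C}\leq E_{n}(f)_{C}$ for every $x$ and every $\delta >0$. Averaging over $\delta =\pi /(k+1)$ then yields $F_{n,m}(f,x)_{C}\leq E_{n}(f)_{C}$ and, in exactly the same way, $F_{n,\nu }(f,x)_{C}\leq E_{n}(f)_{C}$, both uniformly in $x$. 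Consequently each summand appearing in Theorem 2 is bounded, independently of $x$, by $2E_{n-m+\nu }(f)_{C}/(m+\nu +1)$.

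For the second reduction I would use the normalization $\|g\|_{C,x,0}=\left\vert g(x)\right\vert$ recorded in the Introduction, so that $E_{2n}(f,x;0)_{C}=\inf_{T\in H_{2n}}\left\vert f(x)-T(x)\right\vert$. Since $H_{2n}$ contains the constants, for any $T_{0}\in H_{2n}$ the polynomial $T=T_{0}+(f(x)-T_{0}(x))$ again lies in $H_{2n}$ and satisfies $T(x)=f(x)$; hence this infimum equals $0$ and the term drops out. Collecting the estimates and taking the supremum over $x$ on the left-hand side of the pointwise inequality of Theorem 2 (using that the supremum of a sum is at most the sum of the suprema) gives
\[
\left\Vert \sigma _{n,m}f\left( \cdot \right) -f\left( \cdot \right) \right\Vert _{C}\leq 2K\sum_{\nu =0}^{n}\frac{E_{n-m+\nu }\left( f\right) _{C}}{m+\nu +1},
\]
which is the asserted bound after renaming the constant.

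I do not expect a genuine obstacle, since Theorem 2 already carries the full analytic content and Theorem 3 is a corollary obtained by a uniform-in-$x$ majorization. The step deserving the most care is verifying that the pointwise characteristics $F_{n-m+\nu ,m}(f,x)_{C}$ and $F_{n-m+\nu ,\nu }(f,x)_{C}$ are controlled by the global $E_{n-m+\nu }(f)_{C}$ uniformly in $x$, so that the passage to the supremum may be performed term by term, and confirming that the zero-scale term $E_{2n}(f,x;0)_{C}$ is genuinely $0$ rather than merely small.
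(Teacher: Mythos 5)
Your proposal is correct and follows essentially the same route as the paper: the paper's entire proof of this theorem is the single observation $\left\Vert E_{n}\left( f,\cdot ;\delta \right) _{C}\right\Vert _{C}\leq E_{n}\left( f\right) _{C}$ applied to the bound of Theorem 2, which is exactly your first reduction. Your additional disposal of the zero-scale term $E_{2n}\left( f,x;0\right) _{C}$ (which the paper passes over in silence) is sound on the paper's literal definitions, and even under the alternative reading $E_{2n}\left( f,x;0\right) _{C}=\left\vert f\left( x\right) -T_{2n}\left( x\right) \right\vert \leq E_{2n}\left( f\right) _{C}$ the term is absorbed into the sum since $\sum_{\nu =0}^{n}\left( m+\nu +1\right) ^{-1}\geq 1/2$ for $m\leq n$.
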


\begin{remark}
Theorem also holds if instead of \ $C$ \ we consider the spaces $X^{p}$ with
\ $1<p<\infty .$ In the proof we need the Hardy-Littlewood estimate of the
maximal function.
\end{remark}

At every $X^{p}-point\ $ $x$ of \ $f$%
\begin{equation*}
\Omega _{x}f\left( \gamma \right) _{X^{p}}=o_{x}\left( 1\right) \text{ \ \
as \ }\gamma \rightarrow 0+
\end{equation*}%
and thus from Theorem 1 we obtain the corollary which state the result of
the Tanovi\'{c}-Miller type\cite{TM}.

\begin{corollary}
If \ $f\in X^{p}$\ \ then, for any positive integer \ $m\leq n$ \ at every $%
X^{p}-point$ $x$ of \ $f$,%
\begin{equation*}
\left\vert \sigma _{n,m}f\left( x\right) -f\left( x\right) \right\vert
=o_{x}\left( 1\right) \left[ 1+\ln \frac{n+1}{m+1}\right] \text{ \ as \ }%
n\rightarrow \infty \text{ .}
\end{equation*}
\end{corollary}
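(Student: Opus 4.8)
The plan is to read the corollary directly off the second estimate of Theorem 1, converting its right--hand side into an $o_x(1)$ term by means of the $X^p$--point hypothesis. That estimate reads
\[
\left\vert\sigma_{n,m}f(x)-f(x)\right\vert\leq(6+\pi^{2})F_{n-m,m}(f,x)_{X^{p}}\Bigl[1+\ln\frac{n+1}{m+1}\Bigr]+E_{n-m}(f,x;0)_{X^{p}}.
\]
First I would discard the last summand: since $H_{n-m}$ contains the constant functions and $\Vert g\Vert_{X,x,0}=\left\vert g(x)\right\vert$, the choice $T\equiv f(x)$ gives $E_{n-m}(f,x;0)_{X^{p}}=\inf_{T\in H_{n-m}}\left\vert f(x)-T(x)\right\vert=0$. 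Thus the whole problem reduces to showing that the coefficient $F_{n-m,m}(f,x)_{X^{p}}$ multiplying the logarithm tends to $0$.

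Next I would estimate that coefficient by the averaged local modulus. Taking once more the constant polynomial $T\equiv f(x)$ in the definition of $E_{n-m}(f,x;\delta)_{X^{p}}$ yields $E_{N}(f,x;\delta)_{X^{p}}\leq\Vert\Delta_{x}f\Vert_{X^{p},x,\delta}=w_{x}f(\delta)_{X^{p}}$ for every $N\geq0$, and therefore
\[
F_{n-m,m}(f,x)_{X^{p}}=\frac{1}{m+1}\sum_{k=0}^{m}E_{n-m}\!\left(f,x;\frac{\pi}{k+1}\right)_{X^{p}}\leq\frac{1}{m+1}\sum_{k=0}^{m}w_{x}f\!\left(\frac{\pi}{k+1}\right)_{X^{p}}=\Omega_{x}f\!\left(\frac{\pi}{m+1}\right)_{X^{p}}.
\]
Now I would invoke the $X^{p}$--point property: at a Lebesgue point (resp.\ a point of continuity) one has $w_{x}f(\delta)_{X^{p}}\to0$ as $\delta\to0+$, so $\Omega_{x}f(\gamma)_{X^{p}}$, being the arithmetic mean of the null sequence $\bigl(w_{x}f(\pi/(k+1))_{X^{p}}\bigr)_{k}$, also tends to $0$ by Ces\`aro's theorem; this is exactly the fact displayed just before the corollary. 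Substituting back gives
\[
\left\vert\sigma_{n,m}f(x)-f(x)\right\vert\leq(6+\pi^{2})\,\Omega_{x}f\!\left(\frac{\pi}{m+1}\right)_{X^{p}}\Bigl[1+\ln\frac{n+1}{m+1}\Bigr]=o_{x}(1)\Bigl[1+\ln\frac{n+1}{m+1}\Bigr].
\]

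The step I expect to be the real obstacle is the passage to $o_{x}(1)$ uniformly in $m\leq n$. The bound above produces smallness only through the scale $\pi/(m+1)$, i.e.\ only when $m\to\infty$; for bounded $m$ this scale stays away from $0$ and the decay must instead come from the growing degree $n-m$. The extreme case $m=0$ is instructive: there $\sigma_{n,0}f=S_{n}f$ and $F_{n,0}(f,x)_{X^{p}}=E_{n}(f,x;\pi)_{X^{p}}$, so one is really claiming $E_{n}(f,x;\pi)_{X^{p}}\to0$, the local (maximal--function) analogue of Lebesgue's classical estimate $S_{n}f(x)=o_{x}(\log n)$. Establishing $E_{n-m}(f,x;\delta)_{X^{p}}\to0$ as $n-m\to\infty$ for a fixed $\delta$ at an $X^{p}$--point, which requires replacing the crude constant polynomial by a Jackson--type near--best approximant and exploiting the $X^{p}$--point structure at fine scales, is the genuinely delicate ingredient; once it is combined with the $m\to\infty$ regime one obtains $F_{n-m,m}(f,x)_{X^{p}}=o_{x}(1)$ throughout $m\leq n$ and the corollary follows.
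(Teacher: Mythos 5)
Your positive steps reproduce the paper's own argument exactly: the author proves this corollary in one line, combining the second estimate of Theorem 1 with the remark, displayed just before the statement, that $\Omega _{x}f\left( \gamma \right) _{X^{p}}=o_{x}\left( 1\right) $ as $\gamma \rightarrow 0+$ at every $X^{p}$-point. The implicit intermediate facts are the ones you supply, namely $E_{n-m}\left( f,x;0\right) _{X^{p}}=0$ (constant polynomial $T\equiv f\left( x\right) $) and $F_{n-m,m}\left( f,x\right) _{X^{p}}\leq \frac{1}{m+1}\sum_{k=0}^{m}w_{x}f\left( \frac{\pi }{k+1}\right) _{X^{p}}=\Omega _{x}f\left( \frac{\pi }{m+1}\right) _{X^{p}}$, followed by the Ces\`{a}ro argument. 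So up to that point your reconstruction is correct and is essentially identical to the paper's proof.

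The obstacle you isolate in your final paragraph is, however, a genuine gap, and the paper does not close it either: in this argument the only source of smallness is the scale $\pi /\left( m+1\right) $, so the coefficient of $\left[ 1+\ln \frac{n+1}{m+1}\right] $ is shown to be $o_{x}\left( 1\right) $ only in the regime $m\rightarrow \infty $ together with $n$. For bounded $m$ (already for $m=0$, where the claim reduces to Lebesgue's $S_{n}f\left( x\right) -f\left( x\right) =o_{x}\left( \ln n\right) $ at Lebesgue points) one needs $E_{n-m}\left( f,x;\delta \right) _{X^{p}}\rightarrow 0$ as $n-m\rightarrow \infty $ for a fixed $\delta $, and this requires a localization device (splitting $f$ into a part supported near $x$, controlled by the $X^{p}$-point property, and a part vanishing near $x$, approximated by delayed or Fej\'{e}r means whose error is small both globally in $X^{p}$ and uniformly near $x$); the crude choice $T\equiv f\left( x\right) $ cannot produce this decay. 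You correctly name this as the delicate ingredient but only assert it, so your write-up, like the paper's, actually establishes the corollary only when $m\rightarrow \infty $; if the statement is meant to hold for every fixed $m$ as $n\rightarrow \infty $, both proofs are incomplete at the same point.
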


\section{Auxiliary results}

In order to proof our theorems we require some lemmas

\begin{lemma}
If $T_{n}$\ is the trigonometric polynomial of the degree at most \ $n$ \ of
the best approximation of \ $f\in X^{p}$ \ with respect to the norm \ $%
\left\Vert \cdot \right\Vert _{X^{p}}$ then, it is also the trigonometric
polynomial of the degree at most \ $n$ \ of the best approximation of \ $%
f\in X^{p}$ \ with respect to the norm \ $\left\Vert \cdot \right\Vert
_{X^{p},x,\delta }$ for any $\delta \in \left[ 0,\pi \right] .$
\end{lemma}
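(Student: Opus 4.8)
The plan is to exploit translation invariance together with an averaging identity linking the single‑scale quantities $\Vert\cdot\Vert_{X^{p},x,h}^{\circ}$ to the global norm $\Vert\cdot\Vert_{X^{p}}$. First I would reduce to the case $x=0$. Writing $g=f(\cdot+x)$ and noting that $S\mapsto S(\cdot+x)$ is a bijection of $H_{n}$ onto itself that preserves $\Vert\cdot\Vert_{X^{p}}$ by $2\pi$‑periodicity, the global best approximant of $g$ is $T_{n}(\cdot+x)$; moreover $\Vert f-T\Vert_{X^{p},x,\delta}=\Vert g-T(\cdot+x)\Vert_{X^{p},0,\delta}$. Hence it suffices to prove the statement at the single point $0$, where every interval $[-h,h]$ occurring in the seminorms is symmetric about the origin — a feature I expect to be essential.

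Since $\Vert f-T\Vert_{X^{p},0,\delta}=\sup_{0<h\le\delta}\Vert f-T\Vert_{X^{p},0,h}^{\circ}$ is a supremum of single‑scale seminorms, it is enough to establish the stronger, scale‑by‑scale optimality: for every fixed $h\in(0,\delta]$ and every $T\in H_{n}$,
\[
\Vert f-T_{n}\Vert_{X^{p},0,h}^{\circ}\le\Vert f-T\Vert_{X^{p},0,h}^{\circ},
\]
because optimality of $T_{n}$ at each scale forces optimality for the supremum. The natural tool for the single‑scale step is the Fubini identity
\[
\int_{-\pi}^{\pi}\bigl(\Vert f-T\Vert_{X^{p},x,h}^{\circ}\bigr)^{p}\,dx=\Vert f-T\Vert_{X^{p}}^{p}\qquad(1\le p<\infty),
\]
valid for each fixed $h$ by periodicity, which already shows that $T_{n}$ minimizes the $x$‑average of the $p$‑th power of the local error; for $p=\infty$ the analogous relation $\sup_{x}\Vert f-T\Vert_{C,x,h}^{\circ}=\Vert f-T\Vert_{C}$ is immediate and yields the claim after taking the supremum in $x$.

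The main obstacle is precisely the passage from this averaged‑in‑$x$ optimality to optimality at the single fixed point $0$: the identity above controls only $\int_{x}(\cdots)\,dx$, not the integrand at $x=0$. To remove the average I would try to localize the variational characterization of the best approximant (for $1<p<\infty$ the orthogonality relation $\int|f-T_{n}|^{p-1}\operatorname{sgn}(f-T_{n})\,\tau=0$ for all $\tau\in H_{n}$, with the appropriate substitutes for $p=1$ and $p=\infty$) to the symmetric window $[-h,h]$, using the reflection symmetry of $[-h,h]$ about $0$ and the exact normalization of $\Vert\cdot\Vert_{X^{p},x,h}^{\circ}$. This de‑averaging step is the delicate heart of the lemma and the place where the argument is most likely to require the full structure of the problem rather than a routine estimate.
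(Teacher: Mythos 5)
Your proposal is not a proof: it is a (correct) reduction plus an explicit admission that the decisive step is missing. The translation argument reducing to $x=0$, the reduction to scale-by-scale optimality, and the Fubini identity $\int_{-\pi}^{\pi}\bigl(\Vert f-T\Vert_{X^{p},x,h}^{\circ}\bigr)^{p}dx=\Vert f-T\Vert_{X^{p}}^{p}$ are all fine, but they only show that $T_{n}$ minimizes the $x$-\emph{average} of the local error; the passage to optimality at a single fixed point, which you defer to a hoped-for ``localization of the variational characterization,'' is the entire content of the lemma. Worse, that step cannot be carried out, because the strengthened claim you reduce to --- $\Vert f-T_{n}\Vert_{X^{p},x,h}^{\circ}\le\Vert f-T\Vert_{X^{p},x,h}^{\circ}$ for every $T\in H_{n}$ and every fixed $x$ and $h$ --- is false. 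Take $p=\infty$, $n=0$, $f(x)=\sin^{2}x$ and $x=0$: the unique best global constant is $T_{0}=1/2$, and $\Vert f-1/2\Vert_{C,0,h}^{\circ}=1/2$ for all small $h$, whereas the competitor $T\equiv 0$ gives $\Vert f-0\Vert_{C,0,h}^{\circ}=\sin^{2}h\rightarrow 0$. A globally optimal polynomial is governed by the behaviour of $f$ on all of $Q$ and has no reason to be optimal on a small window about one point; the same example shows that $E_{0}(f,0;\delta)_{C}\le\sin^{2}\delta<1/2=\Vert f-T_{0}\Vert_{C,0,\delta}$, so the difficulty is not an artifact of your reduction.

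For comparison, the paper's own proof takes a superficially different route: it introduces the local best approximant $T_{n,\delta}$ for the seminorm $\Vert\cdot\Vert_{X^{p},x,\delta}^{\circ}$, writes $\bigl\Vert\,\Vert f-T_{n,\delta}\Vert_{X^{p},\cdot,\delta}^{\circ}\bigr\Vert_{X^{p}}=\Vert f-T_{n,\delta}\Vert_{X^{p}}$, concludes $\Vert f-T_{n,\delta}\Vert_{X^{p}}=E_{n}(f)_{X^{p}}$, and invokes uniqueness of the best approximant to force $T_{n,\delta}=T_{n}$. But that Fubini step silently treats $T_{n,\delta}$ as independent of $x$, which is exactly the de-averaging issue you isolated; the two arguments founder at the same point, the difference being that the paper asserts its way past it while you stop and flag it. So your diagnosis of where the difficulty lies is accurate, but the proposal does not close the gap, and as stated it cannot be closed.
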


\begin{proof}
From the inequalities%
\begin{eqnarray*}
\left\Vert E_{n}\left( f,\cdot ,\delta \right) _{X^{p}}\right\Vert _{X^{p}}
&\geq &\left\Vert E_{n}^{\circ }\left( f,\cdot ,\delta \right)
_{X^{p}}\right\Vert _{X^{p}} \\
&=&\left\Vert \left\Vert f-T_{n,\delta }\right\Vert _{X^{p},\cdot ,\delta
}^{\circ }\right\Vert _{X^{p}}=\left\Vert f-T_{n,\delta }\right\Vert _{X^{p}}
\\
&\geq &\left\Vert f-T_{n}\right\Vert _{X^{p}}=E_{n}\left( f\right) _{X^{p}}
\end{eqnarray*}%
and%
\begin{equation*}
\left\Vert E_{n}^{\circ }\left( f,\cdot ,\delta \right) _{X^{p}}\right\Vert
_{X^{p}}\leq \left\Vert \left\Vert f-T_{n}\right\Vert _{X^{p},\cdot ,\delta
}^{\circ }\right\Vert _{X^{p}}=\left\Vert f-T_{n}\right\Vert
_{X^{p}}=E_{n}\left( f\right) _{X^{p}}\text{ ,}
\end{equation*}%
where \ $T_{n,\delta }$\ and \ $T_{n}$\ are the trigonometric polynomials of
the degree at most \ $n$ \ of the best approximation of \ $f\in X^{p}$ \
with respect to the norms \ $\left\Vert \cdot \right\Vert _{X^{p},x,\delta
}^{\circ }$ and $\left\Vert \cdot \right\Vert _{X^{p}}$ respectively$\ ,$ we
obtain relation%
\begin{equation*}
\left\Vert f-T_{n,\delta }\right\Vert _{X^{p}}=\left\Vert f-T_{n}\right\Vert
_{X^{p}}=E_{n}\left( f\right) _{X^{p}}\text{ ,}
\end{equation*}%
whence \ \ $T_{n,\delta }=T_{n}$ \ for any $\delta \in \left[ 0,\pi \right] $
by uniqueness of the trigonometric polynomial of the degree at most \ $n$ \
of the best approximation of \ $f\in X^{p}$ \ with respect to the norm \ $%
\left\Vert \cdot \right\Vert _{X^{p}}$ $\ $(see e.g.\cite{BN} p. 96)$.$We
can also observe that for such $T_{n}$ and any $h\in \left[ 0,\delta \right] 
$ \ 
\begin{equation*}
\left\Vert f-T_{n}\right\Vert _{X^{p},x,h}^{\circ }=E_{n}^{\circ }\left(
f,x,h\right) _{X^{p}}\leq E_{n}\left( f,x,\delta \right) _{X^{p}}\leq
\left\Vert f-T_{n}\right\Vert _{X^{p},x,\delta }\text{ \ .}
\end{equation*}%
Hence%
\begin{equation*}
E_{n}\left( f,x,\delta \right) _{Xp}=\left\Vert f-T_{n}\right\Vert
_{X^{p},x,\delta }\text{ \ }
\end{equation*}%
and our proof is complete.
\end{proof}

\begin{lemma}
If $n\in 
\mathbb{N}
_{0}$ and $\delta >0$ then $\ E_{n}\left( f,x;\delta \right) _{X^{p}}$ is
nonincreasing function of \ $n$ \ and nondecreasing function of $\delta .$%
These imply that for $m,n\in 
\mathbb{N}
$\ the function $\ F_{n,m}\left( f,x\right) _{X^{p}}$ is nonincreasing
function of \ $n$ and $m$ simultaneously.
\end{lemma}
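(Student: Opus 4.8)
The plan is to first establish the two monotonicity properties of $E_{n}\left( f,x;\delta \right) _{X^{p}}$ and then read off the behaviour of $F_{n,m}\left( f,x\right) _{X^{p}}$ as a consequence. For the monotonicity in $n$ I would use the nesting $H_{n}\subseteq H_{n+1}$: every trigonometric polynomial of degree at most $n$ is also of degree at most $n+1$, so the infimum defining $E_{n+1}\left( f,x;\delta \right) _{X^{p}}$ is taken over a larger admissible family than that defining $E_{n}\left( f,x;\delta \right) _{X^{p}}$, and an infimum over a larger set can only decrease. This gives $E_{n+1}\left( f,x;\delta \right) _{X^{p}}\leq E_{n}\left( f,x;\delta \right) _{X^{p}}$, i.e.\ the asserted monotonicity in $n$.

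For the monotonicity in $\delta $ the key point is the structure of the quantity $\left\Vert g\right\Vert _{X^{p},x,\delta }=\sup_{0<h\leq \delta }\left\Vert g\right\Vert _{X^{p},x,h}^{\circ }$, which is a supremum over the expanding family of radii $\{h:0<h\leq \delta \}$. Hence for $0\leq \delta _{1}\leq \delta _{2}$ one has $\left\Vert g\right\Vert _{X^{p},x,\delta _{1}}\leq \left\Vert g\right\Vert _{X^{p},x,\delta _{2}}$ for every $g$, in particular for each $g=f-T$ with $T\in H_{n}$. Passing to the infimum over $T$ preserves this pointwise inequality and yields $E_{n}\left( f,x;\delta _{1}\right) _{X^{p}}\leq E_{n}\left( f,x;\delta _{2}\right) _{X^{p}}$, which is the monotonicity in $\delta $.

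It then remains to transfer these to $F_{n,m}\left( f,x\right) _{X^{p}}=\frac{1}{m+1}\sum_{k=0}^{m}E_{n}\left( f,x;\frac{\pi }{k+1}\right) _{X^{p}}$. Monotonicity in $n$ is immediate, since $F_{n,m}$ is a finite average with fixed weights $\frac{1}{m+1}$ of terms each nonincreasing in $n$. For monotonicity in $m$ I would first observe that the sequence $a_{k}:=E_{n}\left( f,x;\frac{\pi }{k+1}\right) _{X^{p}}$ is nonincreasing in $k$: as $k$ grows the radius $\frac{\pi }{k+1}$ shrinks, so the monotonicity in $\delta $ just established forces $a_{k+1}\leq a_{k}$. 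The one nontrivial step is the elementary fact that the Ces\`{a}ro means $A_{m}:=\frac{1}{m+1}\sum_{k=0}^{m}a_{k}$ of a nonincreasing sequence are themselves nonincreasing; this reduces to the inequality $a_{m+1}\leq A_{m}$ (valid because $a_{m+1}$ is the smallest of the averaged terms), whence $A_{m+1}=\frac{(m+1)A_{m}+a_{m+1}}{m+2}\leq A_{m}$. Combining the two directions, $F_{n,m}\left( f,x\right) _{X^{p}}$ decreases in each variable separately and therefore simultaneously. The only place requiring an actual, if short, computation rather than a direct appeal to the definitions is this last Ces\`{a}ro step, so that is where I would be most careful; every other step is merely monotonicity of infima under set inclusion and under an expanding supremum.
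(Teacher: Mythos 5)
Your proposal is correct and follows essentially the same route as the paper: monotonicity in $n$ from the nesting $H_{n}\subseteq H_{n+1}$, monotonicity in $\delta$ from the expanding supremum in $\left\Vert \cdot \right\Vert _{X^{p},x,\delta }$, and then the observation that the Ces\`{a}ro means of the nonincreasing sequence $E_{n}\left( f,x;\frac{\pi }{k+1}\right) _{X^{p}}$ are nonincreasing. The paper packages this last step as the ratio computation $F_{n,m+1}/F_{n,m}\leq 1$, which is algebraically the same inequality $a_{m+1}\leq A_{m}$ you isolate (your difference formulation even has the minor advantage of not dividing by a possibly vanishing $F_{n,m}$).
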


\begin{proof}
The first part of our statement follows from the property of the norm $%
\left\Vert \cdot \right\Vert _{x,\delta }$ and supremum. The second part is
a consequence of the calculation%
\begin{eqnarray*}
\frac{F_{n,m+1}\left( f,x\right) _{X^{p}}}{F_{n,m}\left( f,x\right) _{X^{p}}}
&=&\frac{m+1}{m+2}\left( 1+\frac{E_{n}\left( f,x;\frac{\pi }{m+2}\right)
_{X^{p}}}{\sum_{k=0}^{m}E_{n}\left( f,x;\frac{\pi }{k+1}\right) _{X^{p}}}%
\right) \\
&\leq &\frac{m+1}{m+2}\left( 1+\frac{E_{n}\left( f,x;\frac{\pi }{m+1}\right)
_{X^{p}}}{\sum_{k=0}^{m}E_{n}\left( f,x;\frac{\pi }{m+1}\right) _{X^{p}}}%
\right) \\
&=&\frac{m+1}{m+2}\left( 1+\frac{1}{m+1}\right) =1.
\end{eqnarray*}
\end{proof}

\begin{lemma}
Let \ $m,n,q\in 
\mathbb{N}
_{0}$ such that \ $m\leq n$ and $q\geq m+1.$ If \ $f\in X^{p}$ then%
\begin{equation*}
\left\vert \sigma _{n+q,m}f\left( x\right) -\sigma _{n,m}f\left( x\right)
\right\vert \leq KF_{n-m,m}\left( f,x\right) _{X^{p}}\sum_{\nu =0}^{q-1}%
\frac{1}{m+\nu +1}.
\end{equation*}
\end{lemma}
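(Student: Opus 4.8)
The plan is to use that both means reproduce low-degree polynomials and then to bound one explicit kernel. The lowest partial-sum index occurring in $\sigma_{n,m}$ is $n-m$ and in $\sigma_{n+q,m}$ is $n+q-m$; since $q\ge m+1>0$ we have $n+q-m>n-m$, so every $T\in H_{n-m}$ satisfies $\sigma_{n+q,m}T=\sigma_{n,m}T=T$. Taking $T=T_{n-m}$, the best approximant of degree $\le n-m$ in $\|\cdot\|_{X^p}$, and setting $g=f-T_{n-m}$, I would first reduce to $\sigma_{n+q,m}f(x)-\sigma_{n,m}f(x)=\sigma_{n+q,m}g(x)-\sigma_{n,m}g(x)$. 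By Lemma 1 this is exactly the substitution that translates everything into the target quantity, since $\|g\|_{X^p,x,\delta}=E_{n-m}(f,x;\delta)_{X^p}$ for all $\delta\in[0,\pi]$, and hence $\frac1{m+1}\sum_{k=0}^m\|g\|_{X^p,x,\pi/(k+1)}=F_{n-m,m}(f,x)_{X^p}$.

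Next I would pass to kernels. Writing $V_{n,m}=\frac1{m+1}\sum_{k=n-m}^nD_k$ for the de la Vall\'ee-Poussin kernel and summing the elementary identity for $\sum_k\sin((k+\tfrac12)t)$ gives $V_{n,m}(t)=\frac{\sin\frac{(2n-m+1)t}{2}\,\sin\frac{(m+1)t}{2}}{2(m+1)\sin^2\frac t2}$; a sum-to-product step then yields the difference kernel $\Phi:=V_{n+q,m}-V_{n,m}$ in closed form, $\Phi(t)=\frac{\sin\frac{(m+1)t}{2}\,\sin\frac{qt}{2}}{(m+1)\sin^2\frac t2}\cos\frac{(2n-m+q+1)t}{2}$. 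Since $\Phi$ is even, $\sigma_{n+q,m}g(x)-\sigma_{n,m}g(x)=\frac1\pi\int_0^\pi[g(x+t)+g(x-t)]\Phi(t)\,dt$, and this is the integral to be bounded by a constant multiple of $F_{n-m,m}(f,x)_{X^p}\sum_{\nu=0}^{q-1}\frac1{m+\nu+1}$.

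I would split $(0,\pi]$ at the two scales $\frac1q\le\frac1{m+1}$ (legitimate as $q\ge m+1$). On $(0,\frac1q]$ the bound $|\Phi(t)|\lesssim q$ with H\"older's inequality and $\int_0^{1/q}|g(x+t)|\,dt\lesssim\frac1q\|g\|_{X^p,x,1/q}$ gives a term $\lesssim\|g\|_{X^p,x,1/q}=E_{n-m}(f,x;1/q)_{X^p}\le E_{n-m}(f,x;\pi/(m+1))_{X^p}\le F_{n-m,m}(f,x)_{X^p}$, by the $\delta$-monotonicity of Lemma 2 and the fact that the $k=m$ term is the smallest in $F$; as $\sum_{\nu=0}^{q-1}\frac1{m+\nu+1}\ge\frac12$ this is harmless. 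On $[\frac1q,\frac1{m+1}]$ the bound $|\Phi(t)|\lesssim\frac1t$ and a dyadic decomposition give, per octave $[h,2h]$, $\int_h^{2h}\frac{|g(x+t)|}{t}\,dt\lesssim\|g\|_{X^p,x,2h}\le F_{n-m,m}(f,x)_{X^p}$, while the number of octaves is $\lesssim\ln\frac{q}{m+1}\lesssim\sum_{\nu=0}^{q-1}\frac1{m+\nu+1}$. This middle range is what manufactures the harmonic sum.

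The range $[\frac1{m+1},\pi]$ is the main obstacle. Here the absolute-value bound $|\Phi(t)|\lesssim\frac1{(m+1)t^2}$ is too lossy: because of the weight $t^{-2}$ it is governed by the endpoint $t\approx\frac1{m+1}$ and would only return $E_{n-m}(f,x;\pi)_{X^p}$, which may exceed $F_{n-m,m}(f,x)_{X^p}$ by a factor of order $m+1$. The decay has to be taken instead from the oscillation of $\sin\frac{qt}{2}\cos\frac{(2n-m+q+1)t}{2}$. My plan is to cut $[\frac1{m+1},\pi]$ into the intervals $[\frac\pi{k+1},\frac\pi k]$, $k=1,\dots,m$, which are exactly the scales occurring in $F_{n-m,m}=\frac1{m+1}\sum_{k=0}^m E_{n-m}(f,x;\pi/(k+1))_{X^p}$, and on each to extract cancellation by writing $g(x+t)=[g(x+t)-g(x)]+g(x)$ and applying a second-mean-value (Abel-type) argument to the signed oscillatory factor, the constant part $|g(x)|=E_{n-m}(f,x;0)_{X^p}\le F_{n-m,m}(f,x)_{X^p}$ being handled separately. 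Summing the three ranges gives the claim. The delicate issue throughout is to keep each local contribution controlled by a single $E_{n-m}(f,x;\pi/(k+1))_{X^p}$, so that averaging rebuilds $F_{n-m,m}$ rather than the larger $E_{n-m}(f,x;\pi)$, and it is precisely in the far range that genuine cancellation, not a modulus bound, is indispensable.
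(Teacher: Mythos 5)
Your setup is the paper's: subtract the degree-$(n-m)$ best approximant $T_{n-m}$ (both means reproduce it), pass to the difference kernel $\Phi=V_{n+q,m}-V_{n,m}$, and split at $\pi/q$ and $\pi/(m+1)$. Your treatment of the near and middle ranges matches the paper's $\Sigma_1$ and $\Sigma_2$ (the paper uses an integration by parts where you use a dyadic decomposition, but both correctly produce $E_{n-m}(f,x;\pi/(m+1))[1+\ln\frac{q}{m+1}]$, which is dominated by $F_{n-m,m}\sum_{\nu=0}^{q-1}\frac{1}{m+\nu+1}$). The problem is the far range $[\pi/(m+1),\pi]$, which you correctly single out as the crux and then leave as an unexecuted plan resting on a false premise. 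Your claim that the modulus bound $|\Phi(t)|\le\frac{\pi^2}{(m+1)t^2}$ ``would only return $E_{n-m}(f,x;\pi)$'' and that ``genuine cancellation, not a modulus bound, is indispensable'' is wrong: the paper uses exactly this modulus bound. The point you are missing is the prefactor $\frac{1}{m+1}$ together with an Abel summation in $t$. Writing $G(t)=\int_{-t}^{t}|f(x+u)-T_{n-m}(x+u)|\,du\le 2t\,E_{n-m}(f,x;t)$ and integrating $\int_{\pi/(m+1)}^{\pi}t^{-2}\,dG(t)$ by parts, the far range is bounded by
\begin{equation*}
\frac{K}{m+1}\Bigl\{E_{n-m}\bigl(f,x;\pi\bigr)+\sum_{k=0}^{m-1}E_{n-m}\bigl(f,x;\tfrac{\pi}{k+1}\bigr)\Bigr\}\le K\,F_{n-m,m}(f,x)_{X^{p}},
\end{equation*}
the last step because $(m+1)F_{n-m,m}$ is by definition the sum of the $m+1$ nonnegative terms $E_{n-m}(f,x;\pi/(k+1))$, of which $E_{n-m}(f,x;\pi)$ is one. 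So the apparent loss of a factor $m+1$ is exactly absorbed by the $\frac{1}{m+1}$ in the kernel bound, and the integral over the scales $\pi/(k+1)$ reassembles precisely the average defining $F_{n-m,m}$.

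By contrast, the replacement you propose is not a proof. A second-mean-value argument needs a monotone factor, which $\sin\frac{qt}{2}\cos\frac{(2n-m+q+1)t}{2}$ is not on $[\pi/(m+1),\pi]$; and the decomposition $g(x+t)=[g(x+t)-g(x)]+g(x)$ introduces the quantity $g(x+t)-g(x)$, which is not controlled by any of the characteristics $E_{n-m}(f,x;\delta)$ at your disposal (only integral means of $|g|$ over symmetric intervals are). So as written the far-range estimate, and with it the lemma, is not established. The fix is not more cancellation but more careful bookkeeping: keep the $\frac{1}{m+1}$, integrate by parts against $t^{-2}$ (or, equivalently, run your dyadic decomposition and observe that $F_{n-m,m}$ assigns weight comparable to $2^{-j}$ to the scale $2^{j}\pi/(m+1)$, so that $\sum_{j}2^{-j}E_{n-m}(f,x;2^{j+1}\pi/(m+1))\le K F_{n-m,m}(f,x)$), and the claimed bound follows.
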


\begin{proof}
It is clear that%
\begin{eqnarray*}
\sigma _{n,m}f\left( x\right) &=&\frac{1}{m+1}\sum_{k=n-m}^{n}\frac{1}{\pi }%
\int_{-\pi }^{\pi }f\left( x+t\right) D_{k}\left( t\right) dt \\
&=&\frac{1}{\pi }\int_{-\pi }^{\pi }f\left( x+t\right) V_{n,m}\left(
t\right) dt
\end{eqnarray*}%
where%
\begin{equation*}
V_{n,m}\left( t\right) =\frac{1}{m+1}\sum_{k=n-m}^{n}D_{k}\left( t\right) 
\text{ \ and \ \ }D_{k}\left( t\right) =\frac{\sin \frac{\left( 2k+1\right) t%
}{2}}{2\sin \frac{t}{2}}.
\end{equation*}%
Hence, by orthogonality of the trigonometric system,%
\begin{eqnarray*}
&&\sigma _{n+q,m}f\left( x\right) -\sigma _{n,m}f\left( x\right) \\
&=&\frac{1}{\pi }\int_{-\pi }^{\pi }\left[ f\left( x+t\right) -T_{n-m}\left(
x+t\right) \right] \left( V_{n+q,m}\left( t\right) -V_{n,m}\left( t\right)
\right) dt \\
&=&\frac{1}{\pi \left( m+1\right) }\sum_{k=n-m}^{n}\int_{-\pi }^{\pi }\left[
f\left( x+t\right) -T_{n-m}\left( x+t\right) \right] \left( D_{k+q}\left(
t\right) -D_{k}\left( t\right) \right) dt \\
&=&\frac{1}{\pi \left( m+1\right) }\sum_{k=n-m}^{n}\int_{-\pi }^{\pi }\left[
f\left( x+t\right) -T_{n-m}\left( x+t\right) \right] \\
&&\frac{\sin \frac{\left( 2k+2q+1\right) t}{2}-\sin \frac{\left( 2k+1\right)
t}{2}}{2\sin \frac{t}{2}}dt \\
&=&\frac{1}{\pi \left( m+1\right) }\sum_{k=n-m}^{n}\int_{-\pi }^{\pi }\left[
f\left( x+t\right) -T_{n-m}\left( x+t\right) \right] \frac{\sin \frac{qt}{2}%
\cos \frac{\left( 2k++q1\right) t}{2}}{\sin \frac{t}{2}}dt
\end{eqnarray*}%
with trigonometric polynomial $T_{n-m}$\ \ of the degree at most \ $n-m$ \
of the best approximation of \ $f$ $.$

Using the notations%
\begin{eqnarray*}
I_{1} &=&\left[ -\frac{\pi }{q},\frac{\pi }{q}\right] ,\text{ \ \ \ \ }I_{2}=%
\left[ -\frac{\pi }{m+1},-\frac{\pi }{q}\right] \cup \left[ \frac{\pi }{q},%
\frac{\pi }{m+1}\right] \\
I_{3} &=&\left[ -\pi ,-\frac{\pi }{m+1}\right] \cup \left[ \frac{\pi }{m+1}%
,\pi \right]
\end{eqnarray*}%
we get%
\begin{eqnarray*}
\sum &=&\frac{1}{\pi \left( m+1\right) }\sum_{k=n-m}^{n}\left(
\int\limits_{I_{1}}+\int\limits_{\text{\ }I_{2}}+\int\limits_{I_{3}}\right) %
\left[ f\left( x+t\right) -T_{n-m}\left( x+t\right) \right] \\
&&\frac{\sin \frac{qt}{2}\cos \frac{\left( 2k+q+1\right) t}{2}}{\sin \frac{t%
}{2}}dt \\
&=&\sum\nolimits_{1}+\sum\nolimits_{2}+\sum\nolimits_{3}.
\end{eqnarray*}%
and%
\begin{eqnarray*}
\sum\nolimits_{1} &\leq &\frac{1}{\pi \left( m+1\right) }\sum_{k=n-m}^{n}%
\int\limits_{I_{1}}\left\vert f\left( x+t\right) -T_{n-m}\left( x+t\right)
\right\vert qdt \\
&=&\frac{q}{\pi }\int\limits_{I_{1}}\left\vert f\left( x+t\right)
-T_{n-m}\left( x+t\right) \right\vert dt \\
&\leq &2E_{n-m}\left( f,x;\frac{\pi }{q}\right) _{X^{p}}
\end{eqnarray*}%
We next evaluate the sums \ $\sum_{2}$ and $\sum_{3}$ using the partial
integrating and \ Lemma 1. Thus%
\begin{eqnarray*}
\sum\nolimits_{2} &\leq &\int\limits_{\text{\ }I_{2}}\frac{\left\vert
f\left( x+t\right) -T_{n-m}\left( x+t\right) \right\vert }{t}dt \\
&=&2\left[ \frac{1}{2t}\int_{-t}^{t}\left\vert f\left( x+u\right)
-T_{n-m}\left( x+u\right) \right\vert du\right] _{t=\frac{\pi }{q}}^{t=\frac{%
\pi }{m+1}} \\
&&+2\int_{\frac{\pi }{q}}^{\frac{\pi }{m+1}}\frac{1}{t}\left[ \frac{1}{2t}%
\int_{-t}^{t}\left\vert f\left( x+u\right) -T_{n-m}\left( x+u\right)
\right\vert du\right] dt
\end{eqnarray*}%
\begin{eqnarray*}
&\leq &2E_{n-m}\left( f,x;\frac{\pi }{m+1}\right) _{X^{p}}+2\int_{-t}^{t}%
\frac{1}{t}E_{n-m}\left( f,x;t\right) _{X^{p}}dt \\
&\leq &4E_{n-m}\left( f,x;\frac{\pi }{m+1}\right) _{X^{p}}\left[ 1+\ln \frac{%
q}{m+1}\right] \\
&\leq &4E_{n-m}\left( f,x;\frac{\pi }{m+1}\right) _{X^{p}}\left[ 1+\sum_{\nu
=0}^{q-1}\frac{1}{m+\nu +1}\right]
\end{eqnarray*}%
and%
\begin{eqnarray*}
\sum\nolimits_{3} &\leq &\frac{1}{m+1}\int\limits_{I_{3}}\frac{\left\vert
f\left( x+t\right) -T_{n-m}\left( x+t\right) \right\vert }{t} \\
&&\left\vert \sum_{k=n-m}^{n}\cos \left( kt+\frac{q+1}{2}t\right)
\right\vert dt
\end{eqnarray*}%
\begin{eqnarray*}
&\leq &\frac{1}{m+1}\int\limits_{I_{3}}\frac{\left\vert f\left( x+t\right)
-T_{n-m}\left( x+t\right) \right\vert }{t} \\
&&\left\vert \frac{2\sin \frac{\left( n+1\right) t}{2}\cos \frac{\left(
2n-m+q+1\right) t}{2}}{2\sin \frac{t}{2}}\right\vert dt
\end{eqnarray*}%
\begin{eqnarray*}
&\leq &\frac{\pi }{m+1}\int\limits_{I_{3}}\frac{\left\vert f\left(
x+t\right) -T_{n-m}\left( x+t\right) \right\vert }{t^{2}}dt \\
&=&\frac{\pi }{m+1}\left\{ 2\left[ \frac{1}{2t}\int_{-t}^{t}\left\vert
f\left( x+u\right) -T_{n-m}\left( x+u\right) \right\vert du\right] _{t=\frac{%
\pi }{m+1}}^{t=\pi }\right. \\
&&+4\int_{\frac{\pi }{m+1}}^{\pi }\frac{1}{t^{2}}\left[ \frac{1}{2t}%
\int_{-t}^{t}\left\vert f\left( x+u\right) -T_{n-m}\left( x+u\right)
\right\vert du\right] dt
\end{eqnarray*}%
\begin{eqnarray*}
&\leq &\frac{\pi }{m+1}\left\{ 2E_{n-m}\left( f,x;\pi \right)
_{X^{p}}+4\int_{\frac{\pi }{m+1}}^{\pi }\frac{1}{t^{2}}E_{n-m}\left(
f,x;t\right) _{X^{p}}dt\right\} \\
&=&\frac{2\pi }{m+1}\left\{ E_{n-m}\left( f,x;\pi \right)
_{X^{p}}+2\int_{1}^{m+1}\frac{E_{n-m}\left( f,x;\frac{\pi }{u}\right)
_{X^{p}}}{\pi ^{2}/u^{2}}\frac{\pi du}{u^{2}}\right\} \\
&=&\frac{2\pi }{m+1}\left\{ E_{n-m}\left( f,x;\pi \right) _{X^{p}}+\frac{2}{%
\pi }\sum_{k=0}^{m-1}\int_{k+1}^{k+2}E_{n-m}\left( f,x;\frac{\pi }{u}\right)
_{X^{p}}du\right\}
\end{eqnarray*}%
\begin{eqnarray*}
&=&\frac{2\pi }{m+1}\left\{ E_{n-m}\left( f,x;\pi \right) _{X^{p}}+\frac{2}{%
\pi }\sum_{k=0}^{m-1}E_{n-m}\left( f,x;\frac{\pi }{k+1}\right)
_{X^{p}}\right\} \\
&\leq &\frac{2\pi +\frac{2}{\pi }}{m+1}\sum_{k=0}^{m-1}E_{n-m}\left( f,x;%
\frac{\pi }{k+1}\right) _{X^{p}}
\end{eqnarray*}%
which proves Lemma 2.
\end{proof}

Before formulating the next lemmas \ we define a new difference. Let $m,n\in 
\mathbb{N}
_{0}$ \ and \ $m\leq n.$ \ Denote 
\begin{equation*}
\tau _{n,m}f\left( x\right) :=\left( m+1\right) \left\{ \sigma
_{n+m+1,m}f\left( x\right) -\sigma _{n,m}f\left( x\right) \right\} .
\end{equation*}

\begin{lemma}
Let \ $m,n,\mu \in 
\mathbb{N}
_{0}$ such that \ $2\mu \leq m\leq n$ $.$ If \ $f\in X^{p}$ then%
\begin{equation*}
\left\vert \tau _{n,m}f\left( x\right) -\tau _{n-\mu ,m-\mu }f\left(
x\right) \right\vert \leq K\mu F_{n-\mu +1,\mu -1}\left( f,x\right)
_{X^{p}}\ln \frac{m}{\mu }.
\end{equation*}
\end{lemma}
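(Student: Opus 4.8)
The plan is to follow the pattern of the proof of Lemma~3: write the left-hand side as a single integral of $f$ against a kernel, remove a best-approximating polynomial by orthogonality, and split the integral into three ranges of $t$. First I would unwind the definitions: writing each $\sigma$ as an average of partial sums and cancelling the common blocks, the two negative blocks combine to $-\sum_{k=n-\mu+1}^{n}S_kf$ and the two positive blocks to $\sum_{k=n+m-2\mu+2}^{n+m+1}S_kf-\sum_{k=n-\mu+1}^{n}S_kf$, so that
\[
\tau_{n,m}f(x)-\tau_{n-\mu,m-\mu}f(x)=\sum_{k=n+m-2\mu+2}^{n+m+1}S_kf(x)-2\sum_{k=n-\mu+1}^{n}S_kf(x)=\frac{1}{\pi}\int_{-\pi}^{\pi}f(x+t)G(t)\,dt,
\]
where $G(t)=\sum_{k=n+m-2\mu+2}^{n+m+1}D_k(t)-2\sum_{k=n-\mu+1}^{n}D_k(t)$. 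Since the lowest Dirichlet index occurring in $G$ is $n-\mu+1$, every $S_k$ in this combination reproduces polynomials of degree $\le n-\mu+1$; as the two blocks carry equal total mass $2\mu$, the operator annihilates $H_{n-\mu+1}$. Hence I may replace $f$ by $f-T_{n-\mu+1}$, where $T_{n-\mu+1}$ is the polynomial of best approximation of degree $n-\mu+1$, which by Lemma~1 is simultaneously best for all local norms and obeys $\Vert f-T_{n-\mu+1}\Vert^\circ_{X^p,x,t}\le E_{n-\mu+1}(f,x;t)_{X^p}$.

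Next I would estimate $G$. Summing the Dirichlet kernels by $\sum_{k=a}^{b}\sin\frac{(2k+1)t}{2}=\frac{\sin\frac{(a+b+1)t}{2}\sin\frac{(b-a+1)t}{2}}{\sin\frac{t}{2}}$ gives $G(t)=N(t)/(2\sin^2\frac t2)$ with $N(t)=\sin((n+m-\mu+2)t)\sin(\mu t)-2\sin((n-\frac\mu2+1)t)\sin\frac{\mu t}{2}$. The crude bounds $|N(t)|\le 2\mu|t|$ and $|N(t)|\le 3$, together with $\sin\frac t2\ge\frac{|t|}{\pi}$, yield $|G(t)|\le \pi^2\mu/|t|$ everywhere and $|G(t)|\le \tfrac32\pi^2/t^2$. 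The delicate point is the behaviour near the origin: the naive bound $\mu/|t|$ grows like $\mu n$ there, which is too large for the innermost region. Factoring $N(t)=2\sin\frac{\mu t}{2}\bigl[\cos\frac{\mu t}{2}\sin((n+m-\mu+2)t)-\sin((n-\frac\mu2+1)t)\bigr]$ and estimating the bracket by $|1-\cos\frac{\mu t}2|+|\sin((n+m-\mu+2)t)-\sin((n-\frac\mu2+1)t)|\le \frac{\mu^2t^2}{8}+(m+1)|t|$ gives $|N(t)|\le Km\mu t^2$ and hence the bounded estimate $|G(t)|\le Km\mu$ for $|t|\le \pi/m$. This cancellation, enabled by $2\mu\le m$, is the main obstacle.

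Finally I would split $[-\pi,\pi]$ into $I_1=[-\tfrac\pi m,\tfrac\pi m]$, $I_2=\pm[\tfrac\pi m,\tfrac\pi\mu]$ and $I_3=\pm[\tfrac\pi\mu,\pi]$. On $I_1$ the bound $|G|\le Km\mu$ gives $Km\mu\int_{-\pi/m}^{\pi/m}|f-T_{n-\mu+1}|\le K\mu E_{n-\mu+1}(f,x;\tfrac\pi m)$. On $I_2$ the bound $|G|\le \pi^2\mu/|t|$, followed by the same integration by parts as in Lemma~3, produces $K\mu E_{n-\mu+1}(f,x;\tfrac\pi\mu)[1+\ln\frac m\mu]$. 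On $I_3$ the bound $|G|\le K/t^2$, integration by parts and the substitution $t=\pi/u$ convert the tail into $K\sum_{k=0}^{\mu-1}E_{n-\mu+1}(f,x;\tfrac{\pi}{k+1})$. In each case $\mu E_{n-\mu+1}(f,x;\tfrac\pi\mu)\le\sum_{k=0}^{\mu-1}E_{n-\mu+1}(f,x;\tfrac\pi{k+1})=\mu F_{n-\mu+1,\mu-1}(f,x)$ by the monotonicity of Lemma~2, and since $m\ge2\mu$ forces $\ln\frac m\mu\ge\ln2>0$ the additive constants are absorbed, giving the asserted bound $K\mu F_{n-\mu+1,\mu-1}(f,x)_{X^p}\ln\frac m\mu$.
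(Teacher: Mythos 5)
Your argument is correct, but it takes a genuinely different and considerably longer route than the paper. Both proofs start from the same algebraic identity $\tau_{n,m}f(x)-\tau_{n-\mu,m-\mu}f(x)=\bigl(\sum_{k=n+m-2\mu+2}^{n+m+1}-2\sum_{k=n-\mu+1}^{n}\bigr)\bigl[S_kf(x)-f(x)\bigr]$, but at that point the paper (following Leindler) simply splits the block of $2\mu$ partial sums into two blocks of $\mu$ consecutive ones and pairs each with one copy of $\sum_{k=n-\mu+1}^{n}$, so that the expression is dominated by $\mu\left\vert \sigma_{n+m-\mu+1,\mu-1}f(x)-\sigma_{n,\mu-1}f(x)\right\vert+\mu\left\vert \sigma_{n+m+1,\mu-1}f(x)-\sigma_{n,\mu-1}f(x)\right\vert$; each term is then estimated by the already-proved lemma on $\vert\sigma_{n+q,m}f-\sigma_{n,m}f\vert$ with parameters $(n,\mu-1)$ and $q=m-\mu+1$ resp. $q=m+1$ (admissible precisely because $m\geq 2\mu$ forces $q\geq\mu$), yielding $K\mu F_{n-\mu+1,\mu-1}(f,x)_{X^{p}}\bigl[1+\ln\frac{m}{\mu}\bigr]$ at once. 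You instead redo the kernel analysis from scratch for the combined kernel $G$, which obliges you to exhibit the cancellation near $t=0$ by hand (your factoring of $N(t)$ and the bound $\vert G(t)\vert\leq Km\mu$ on $\vert t\vert\leq\pi/m$); in the paper that cancellation is already packaged in the factor $\sin\frac{qt}{2}$ inside the proof of that earlier lemma and never has to be repeated. Your computations do check out: the annihilation of $H_{n-\mu+1}$ by the combination (all Dirichlet indices are at least $n-\mu+1$ and the two blocks have equal mass $2\mu$), the three kernel bounds, the integration by parts on each region, and the reduction of every contribution to $\mu F_{n-\mu+1,\mu-1}(f,x)_{X^{p}}$ via the monotonicity of Lemma 2 are all sound, and $m\geq 2\mu$ lets you absorb the additive constant into $\ln\frac{m}{\mu}$ exactly as the paper implicitly does. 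The trade-off is clear: the paper's reduction is a few lines and reuses existing machinery, while yours is self-contained and makes the structure of the kernel explicit, at the cost of essentially re-proving a special case of the earlier lemma.
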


\begin{proof}
The proof follows by the method of Leindler\cite{L}. Namely%
\begin{equation*}
\tau _{n,m}f\left( x\right) -\tau _{n-\mu ,m-\mu }f\left( x\right) =\left(
\sum_{k=n+m-2\mu +2}^{n+m+1}-2\sum_{k=n-\mu +1}^{n}\right) \left[
S_{k}f\left( x\right) -f\left( x\right) \right]
\end{equation*}%
and%
\begin{equation*}
\left\vert \tau _{n,m}f\left( x\right) -\tau _{n-\mu ,m-\mu }f\left(
x\right) \right\vert \leq
\end{equation*}%
\begin{eqnarray*}
&\leq &\left\vert \left( \sum_{k=n+m-2\mu +2}^{n+m-\mu +1}-\sum_{k=n-\mu
+1}^{n}\right) \left[ S_{k}f\left( x\right) -f\left( x\right) \right]
\right\vert \\
&&+\left\vert \left( \sum_{k=n+m-\mu +2}^{n+m+1}-\sum_{k=n-\mu
+1}^{n}\right) \left[ S_{k}f\left( x\right) -f\left( x\right) \right]
\right\vert
\end{eqnarray*}%
\begin{equation*}
=\mu \left\vert \sigma _{n+m-\mu +1,\mu -1}f\left( x\right) -\sigma _{n,\mu
-1}f\left( x\right) \right\vert
\end{equation*}%
\begin{equation*}
+\mu \left\vert \sigma _{n+m+1,\mu -1}f\left( x\right) -\sigma _{n,\mu
-1}f\left( x\right) \right\vert .
\end{equation*}%
By Lemma 2 , for \ $2\mu \leq m,$%
\begin{equation*}
\left\vert \tau _{n,m}f\left( x\right) -\tau _{n-\mu ,m-\mu }f\left(
x\right) \right\vert \leq
\end{equation*}%
\begin{eqnarray*}
&\leq &K\mu F_{n-\mu +1,\mu -1}\left( f,x\right) _{X^{p}}\left[ 1+\ln \frac{%
\left( n-\mu +1\right) +\mu -1}{\mu }\right] \\
&&+K\mu F_{n-\mu +1,\mu -1}\left( f,x\right) _{X^{p}}\left[ 1+\ln \frac{%
m+\mu -1}{\mu }\right]
\end{eqnarray*}%
\begin{equation*}
\leq K\mu F_{n-\mu +1,\mu -1}\left( f,x\right) _{X^{p}}\left[ 1+\ln \frac{m}{%
\mu }\right]
\end{equation*}%
and our proof is complete.
\end{proof}

\begin{lemma}
Let \ $m,n\in 
\mathbb{N}
_{0}$ and\ $m\leq n$ $.$ If \ $f\in X^{p}$ then%
\begin{equation*}
\left\vert \tau _{n,m}f\left( x\right) \right\vert \leq
K\sum_{k=n-m}^{n}F_{k,k-n+m}\left( f,x\right) _{X^{p}}.
\end{equation*}
\end{lemma}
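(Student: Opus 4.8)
The plan is to bound $\tau_{n,m}f(x)$ by telescoping it down to a base case using Lemma~4. The quantity $\tau_{n,m}f(x)=(m+1)\{\sigma_{n+m+1,m}f(x)-\sigma_{n,m}f(x)\}$ measures a block-difference of de la Vall\'{e}e-Poussin means, and Lemma~4 controls how $\tau$ changes when both indices are simultaneously decreased by a step $\mu$. The idea is to repeatedly apply Lemma~4 with a well-chosen sequence of steps so that we descend from $\tau_{n,m}f(x)$ to a term $\tau_{n_0,m_0}f(x)$ in which the second index has been reduced to a small value (ideally $m_0=0$ or $m_0=1$), where $\tau_{n_0,m_0}f(x)$ can be estimated directly via Lemma~2 or~3.

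First I would write, for a chosen decreasing sequence of second-index values, the telescoping identity
\begin{equation*}
\tau_{n,m}f(x)=\tau_{n-\Sigma,m-\Sigma}f(x)+\sum_{j}\left[\tau_{n_j,m_j}f(x)-\tau_{n_j-\mu_j,m_j-\mu_j}f(x)\right],
\end{equation*}
where each bracketed difference is a single step of Lemma~4. Applying Lemma~4 to each term gives a bound of the form $K\mu_j F_{n_j-\mu_j+1,\mu_j-1}(f,x)_{X^p}\ln\frac{m_j}{\mu_j}$. The natural choice, given the geometric appearance of $\ln\frac{m}{\mu}$ and the constraint $2\mu\le m$ in Lemma~4, is to halve the second index at each stage (a dyadic descent), taking $\mu_j\approx m_j/2$ so that each logarithm is bounded by a constant $\ln 2$. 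With this choice each step contributes $K\,\mu_j F_{n_j-\mu_j+1,\mu_j-1}(f,x)_{X^p}$, and by the monotonicity of $F_{\cdot,\cdot}$ established in Lemma~2 each such term is dominated by a sum of the relevant $F_{k,k-n+m}(f,x)_{X^p}$ values; summing the dyadic contributions reconstructs the full sum $\sum_{k=n-m}^{n}F_{k,k-n+m}(f,x)_{X^p}$ up to the absolute constant $K$.

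I would then treat the base case separately: after the descent the residual term $\tau_{n-\Sigma,m-\Sigma}f(x)$ has a second index that is $O(1)$, so that it is (up to the factor $m+1\to O(1)$) a bounded difference of ordinary means estimated directly by Lemma~2, and this contributes a single $F_{k,k-n+m}(f,x)_{X^p}$-type term that fits inside the asserted sum. Combining the telescoped estimates with the base case, and invoking Lemma~2 to absorb the various index shifts into the clean range $n-m\le k\le n$, yields the claimed inequality.

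The main obstacle will be the bookkeeping of indices: Lemma~4 decreases \emph{both} indices by the same $\mu$, so after several halving steps the first index also drifts, and one must verify at each stage that the hypotheses $2\mu_j\le m_j\le n_j$ remain satisfied and that the argument $F_{n_j-\mu_j+1,\mu_j-1}(f,x)_{X^p}$ produced by Lemma~4 can be bounded, via the two-variable monotonicity of Lemma~2, by terms $F_{k,k-n+m}(f,x)_{X^p}$ with $k$ in the target range $[n-m,n]$. Checking that the dyadic contributions do not overlap-count (so the total stays $O(1)$ times the single sum rather than acquiring a spurious logarithmic factor) is the delicate point, and it is exactly here that the monotonicity lemma and the constancy of $\ln\frac{m_j}{\mu_j}$ under halving do the essential work.
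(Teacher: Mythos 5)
Your proposal follows essentially the same route as the paper: a telescoping identity driven by a dyadic halving of the second index (the paper's sequence $m_{0}=m$, $m_{s}=m_{s-1}-\left[ m_{s-1}/2\right] $ keeps the difference of the two indices equal to $n-m$ throughout, so each step of Lemma~4 has $2\mu \leq m$ and a logarithm bounded by $\ln 3$), with the resulting terms $\mu F_{n-\mu +1,\mu -1}\left( f,x\right) _{X^{p}}$ converted into blocks of the sum $\sum_{k=n-m}^{n}F_{k,k-n+m}\left( f,x\right) _{X^{p}}$ by the monotonicity of Lemma~2, and the residual term $\tau _{n-m+1,1}f\left( x\right) $ handled directly via Lemmas~2 and~3. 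This is exactly the paper's argument (itself following Ste\v{c}kin), so the approach is correct and not genuinely different.
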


\begin{proof}
Our proof runs parallel with the proof of Theorem 1 in \cite{S}.

If \ $m=0$ then%
\begin{equation*}
\left\vert \tau _{n,0}f\left( x\right) \right\vert =\left\vert \sigma
_{n+1,0}f\left( x\right) -\sigma _{n,0}f\left( x\right) \right\vert \leq
KF_{n,0}\left( f,x\right) _{X^{p}}.
\end{equation*}%
and if \ $m=1$ then 
\begin{eqnarray*}
\left\vert \tau _{n,1}f\left( x\right) \right\vert &\leq &2\left\vert \sigma
_{n+1,1}f\left( x\right) -\sigma _{n,1}f\left( x\right) \right\vert \leq
KF_{n-1,1}\left( f,x\right) _{X^{p}} \\
&\leq &K\left[ F_{n-1,1}\left( f,x\right) _{X^{p}}+F_{n-1,1}\left(
f,x\right) _{X^{p}}\right] \\
&\leq &K\left[ F_{n-1,1}\left( f,x\right) _{X^{p}}+F_{n,1}\left( f,x\right)
_{X^{p}}\right]
\end{eqnarray*}%
by Lemma 2 and Lemma 3

Next we construct the same decreasing sequence $\left( m_{s}\right) $.of
integers that was given by S. B. Ste\v{c}kin. Let 
\begin{equation*}
m_{0}=m,\text{ \ }m_{s}=m_{s-1}-\left[ \frac{m_{s-1}}{2}\right] \text{ \ \ \ 
}\left( s=1,2,...\right)
\end{equation*}%
where $\left[ y\right] $ \ denotes the integral part of \ $y$. It is clear
that there exists a smallest index \ $t\geq 1$ such that \ $m_{t}=1$ and 
\begin{equation*}
m=m_{0}>m_{1}>...>m_{t}=1.
\end{equation*}

By the definition of the numbers \ $m_{s}$ we have 
\begin{equation*}
m_{s}\geq m_{s-1}/2
\end{equation*}%
\begin{equation*}
m_{s-1}-m_{s}=\left[ \frac{m_{s-1}}{2}\right] \geq \left[ \frac{m_{s-1}}{3}%
\right] \text{ \ \ \ }\left( s=1,2,...,t\right)
\end{equation*}%
whence%
\begin{equation*}
m_{t-1}=2,\text{ \ }m_{t-1}-m_{t}=1
\end{equation*}%
and%
\begin{equation*}
m_{s-1}-m_{s}\leq m_{s}\leq 3\left( m_{s}-m_{s+1}\right) \text{ \ \ \ }%
\left( s=1,2,...,t-1\right)
\end{equation*}%
follow.

Under these notations we get the following equality%
\begin{equation*}
\tau _{n,m}f\left( x\right) =\sum_{s=1}^{t}\left( \tau
_{n-m+m_{s-1},m_{s-1}}f\left( x\right) -\tau _{n-m+m_{s},m_{s}}f\left(
x\right) \right) +\tau _{n-m+m_{t},m_{t}}f\left( x\right)
\end{equation*}%
whence, by \ $m_{t}=1,$%
\begin{equation*}
\left\vert \tau _{n,m}f\left( x\right) \right\vert \leq
\sum_{s=1}^{t}\left\vert \tau _{n-m+m_{s-1},m_{s-1}}f\left( x\right) -\tau
_{n-m+m_{s},m_{s}}f\left( x\right) \right\vert +\left\vert \tau
_{n-m+m_{t},m_{t}}f\left( x\right) \right\vert
\end{equation*}%
follows.

It is easy to see that the terms in the sum $\sum_{s=1}^{t}$ , by Lemma 4,
with $\mu =m_{s-1}-m_{s}$ and \ $m=m_{s-1}$ do not exceed 
\begin{eqnarray*}
&&K\left( m_{s-1}-m_{s}\right) F_{n-m+m_{s}+1,m_{s-1}-m_{s}}\left(
f,x\right) _{X^{p}}\ln \frac{m_{s-1}}{m_{s-1}-m_{s}}, \\
&&\text{where }\left( s=1,2,...,t-1\right) .
\end{eqnarray*}%
and by Lemma 3 we get 
\begin{equation*}
\left\vert \tau _{n-m+1,1}f\left( x\right) \right\vert \leq 2\left\vert
\sigma _{n-m+2,1}f\left( x\right) -\sigma _{n-m+1,1}f\left( x\right)
\right\vert \leq KF_{n-m,1}\left( f,x\right) _{X^{p}}
\end{equation*}%
Thus%
\begin{eqnarray*}
\left\vert \tau _{n,m}f\left( x\right) \right\vert &\leq
&K\sum_{s=1}^{t-1}3\left( m_{s}-m_{s+1}\right) F_{n-m+m_{s}+1,m_{s}}\left(
f,x\right) _{X^{p}}\ln 3 \\
&&+KF_{n-m+2,m-2}\left( f,x\right) _{X^{p}}+KF_{n-m,1}\left( f,x\right)
_{X^{p}}
\end{eqnarray*}%
whence, by the monotonicity of \ $F_{\nu ,\mu }\left( f,x\right) _{X^{p}},$%
\begin{eqnarray*}
&&\left\vert \tau _{n,m}f\left( x\right) \right\vert \\
&\leq &K\left( \sum_{s=1}^{t-1}\sum_{\nu =m_{s+1}+1}^{m_{s}}F_{n-m+\nu
+1,\nu }\left( f,x\right) _{X^{p}}+\sum_{\nu =0}^{2}F_{n-m+\nu ,m-\nu
-1}\left( f,x\right) _{X^{p}}\right) \\
&&+KF_{n-m,1}\left( f,x\right) _{X^{p}}
\end{eqnarray*}%
\begin{eqnarray*}
&\leq &K\sum_{\nu =0}^{m_{1}+1}F_{n-m+\nu ,\nu }\left( f,x\right)
_{X^{p}}+KF_{n-m,1}\left( f,x\right) _{X^{p}} \\
&\leq &K\sum_{\nu =0}^{m}F_{n-m+\nu ,\nu }\left( f,x\right)
_{X^{p}}+KF_{n-m,1}\left( f,x\right) _{X^{p}} \\
&\leq &K\sum_{k=n-m}^{n}F_{k,k-n+m}\left( f,x\right)
_{X^{p}}+KF_{n-m,1}\left( f,x\right) _{X^{p}}\text{ \ .}
\end{eqnarray*}
\end{proof}

\section{Proofs of the results}

\subsection{Proof of Theorem 2}

The proof follows the lines of the proofs of Theorem 4 in \cite{S} and
Theorem in \cite{L}. Therefore let $n>0$ and $m\leq n$ be fixed. Let us
define an increasing sequence $\left( n_{s}:s=0,1,...,t\right) $ of indices
introduced by S. B. Ste\v{c}ki in the following way. Set $n_{0}=n.$ Assuming
that the numbers \ $n_{0},...,n_{s}$ are already defined and \ $n_{s}<2n,$
we define $n_{s+1}$ as follows: Let \ $\nu _{s}$ denote the smallest natural
number such that 
\begin{equation*}
F_{n_{s}-m+\nu _{s},\nu }\left( f,x\right) _{X^{p}}\leq \frac{1}{2}%
F_{n_{s}-m,\nu }\left( f,x\right) _{X^{p}}\text{ }\left( \nu
=0,1,...,n\right) \text{.}
\end{equation*}%
According to the magnitude of \ $\nu _{s}$ \ we define%
\begin{equation*}
n_{s+1}=\left\{ 
\begin{array}{cc}
n_{s}-m+1 & \text{ \ \ for \ }\nu _{s}\leq m\text{\ ,} \\ 
n_{s}+\nu _{s}\text{ } & \text{for \ \ }m+1\leq \nu _{s}<2n+m-n_{s}\text{ ,}
\\ 
2n+m & \text{ \ \ for \ }\nu _{s}\geq 2n+m-n_{s}\text{ }%
\end{array}%
\right.
\end{equation*}%
If \ $n_{s+1}<2n$ \ we continue the procedure, and if once \ $n_{s+1}\geq 2n$
\ then we stop the construction and define \ $t:=s+1.$

By the above definition of \ $\left( n_{s}\right) $ we have the following
obvious properties:%
\begin{equation*}
t\geq 1,\text{ \ }n=n_{0}<n_{1}<...<n_{t},\text{ \ \ }2n\leq n_{t}\leq 2n+m,
\end{equation*}%
and 
\begin{equation*}
n_{s+1}-n_{s}\geq m+1\text{ \ \ }\left( s=0,1,...,t-1\right) ,
\end{equation*}%
and relations%
\begin{equation*}
F_{n_{s+1}-m,\nu }\left( f,x\right) _{X^{p}}\leq \frac{1}{2}F_{n_{s}-m,\nu
}\left( f,x\right) _{X^{p}}\text{ \ \ for \ }s=0,1,...,t-2\text{,}
\end{equation*}%
and 
\begin{equation*}
\frac{1}{2}F_{n_{s}-m,\nu }\left( f,x\right) _{X^{p}}\leq F_{n_{s+1}-m-1,\nu
}\left( f,x\right) _{X^{p}}\text{ \ \ for \ }s=0,1,...,t-1
\end{equation*}%
whenever \ $n_{s+1}-n_{s}>m+1.$

Let us start with%
\begin{eqnarray*}
\left\vert \sigma _{n,m}f\left( x\right) -f\left( x\right) \right\vert
&=&\sum_{s=0}^{t-1}\left[ \left\vert \sigma _{n_{s},m}f\left( x\right)
-f\left( x\right) \right\vert -\left\vert \sigma _{n_{s+1},m}f\left(
x\right) -f\left( x\right) \right\vert \right] \\
&&+\left\vert \sigma _{n_{t},m}f\left( x\right) -f\left( x\right) \right\vert
\\
&\leq &\sum_{s=0}^{t-1}\left\vert \sigma _{n_{s+1},m}f\left( x\right)
-\sigma _{n_{s},m}f\left( x\right) \right\vert +\left\vert \sigma
_{n_{t},m}f\left( x\right) -f\left( x\right) \right\vert \\
&=&\sum_{s=0}^{t-1}\left\vert \frac{1}{m+1}\tau _{n_{s},m}f\left( x\right)
\right\vert +\left\vert \sigma _{n_{t},m}f\left( x\right) -f\left( x\right)
\right\vert .
\end{eqnarray*}%
Using Theorem 1 and that $2n\leq n_{t}\leq 2n+m$\ \ we get%
\begin{eqnarray*}
\left\vert \sigma _{n_{t},m}f\left( x\right) -f\left( x\right) \right\vert
&\leq &KF_{n_{t}-m,m}\left( f,x\right) _{X^{p}}\left[ 1+\ln \frac{n_{t}+1}{%
m+1}\right] +\left\vert f\left( x\right) -T_{n_{t}-m}\left( x\right)
\right\vert \\
&\leq &K\sum_{\nu =0}^{n}\frac{F_{n-m+\nu ,m}\left( f,x\right) _{X^{p}}}{%
m+\nu +1}+\left\vert f\left( x\right) -T_{n_{t}-m}\left( x\right) \right\vert
\\
&\leq &K\sum_{\nu =0}^{n}\frac{F_{n-m+\nu ,m}\left( f,x\right)
_{X^{p}}+F_{n-m+\nu ,\nu }\left( f,x\right) _{X^{p}}}{m+\nu +1}+\left\vert
f\left( x\right) -T_{n_{t}-m}\left( x\right) \right\vert \\
&\leq &K\sum_{\nu =0}^{n}\frac{F_{n-m+\nu ,m}\left( f,x\right)
_{X^{p}}+F_{n-m+\nu ,\nu }\left( f,x\right) _{X^{p}}}{m+\nu +1}%
+E_{n_{t}-m}\left( f,x;0\right) _{X^{p}} \\
&\leq &K\sum_{\nu =0}^{n}\frac{F_{n-m+\nu ,m}\left( f,x\right)
_{X^{p}}+F_{n-m+\nu ,\nu }\left( f,x\right) _{X^{p}}}{m+\nu +1}+E_{2n}\left(
f,x;0\right) _{X^{p}}.
\end{eqnarray*}

The estimate of the sum we derive from the following one%
\begin{equation*}
\left\vert \frac{1}{m+1}\tau _{n_{s},m}f\left( x\right) \right\vert \leq
K\sum_{\nu =0}^{n_{s+1}-n_{s}-1}\frac{F_{n_{s}-m+\nu ,m}\left( f,x\right)
_{X^{p}}+F_{n_{s}-m+\nu ,m}\left( f,x\right) _{X^{p}}}{m+\nu +1}.
\end{equation*}%
The proof of this inequality we split in two parts. If $\ n_{s+1}-n_{s}=m+1,$
then by Lemma 5, 
\begin{eqnarray*}
\left\vert \frac{1}{m+1}\tau _{n_{s},m}f\left( x\right) \right\vert &\leq &K%
\frac{1}{m+1}\sum_{k=n_{s}-m}^{n_{s}}F_{k,k-n_{s}+m}\left( f,x\right)
_{X^{p}} \\
&\leq &K\sum_{\nu =0}^{n_{s+1}-n_{s}-1}\frac{F_{n_{s}-m+\nu ,\nu }\left(
f,x\right) _{X^{p}}}{m+\nu +1}.
\end{eqnarray*}%
If $n_{s+1}-n_{s}>m+1,$ then, by Lemma 3, 
\begin{equation*}
\left\vert \frac{1}{m+1}\tau _{n_{s},m}f\left( x\right) \right\vert \leq
KF_{n_{s}-m,m}\left( f,x\right) _{X^{p}}\sum_{\nu =0}^{n_{s+1}-n_{s}-1}\frac{%
1}{m+\nu +1}
\end{equation*}%
and since \ $\frac{1}{2}F_{n_{s}-m,m}\left( f,x\right) _{X^{p}}\leq
F_{n_{s+1}-m-1,m}\left( f,x\right) _{X^{p}}$ we have%
\begin{eqnarray*}
\left\vert \frac{1}{m+1}\tau _{n_{s},m}f\left( x\right) \right\vert &\leq
&2KF_{n_{s+1}-m-1,m}\left( f,x\right) _{X^{p}}\sum_{\nu =0}^{n_{s+1}-n_{s}-1}%
\frac{1}{m+\nu +1} \\
&\leq &2K\sum_{\nu =0}^{n_{s+1}-n_{s}-1}\frac{F_{n_{s}-m+\nu ,m}\left(
f,x\right) _{X^{p}}}{m+\nu +1}
\end{eqnarray*}

Consequently,%
\begin{eqnarray*}
&&\sum_{s=0}^{t-1}\left\vert \frac{1}{m+1}\tau _{n_{s},m}f\left( x\right)
\right\vert \\
&\leq &2K\sum_{s=0}^{t-1}\sum_{\nu =0}^{n_{s+1}-n_{s}-1}\frac{F_{n_{s}-m+\nu
,m}\left( f,x\right) _{X^{p}}+F_{n_{s}-m+\nu ,\nu }\left( f,x\right) _{X^{p}}%
}{m+\nu +1}
\end{eqnarray*}%
Since \ $n_{s+1}-n_{s}\leq 2n+m-n-1=n+m-1$ \ for all \ $s\leq t-1$ ,
changing the order of summation we get%
\begin{eqnarray*}
&&\sum_{s=0}^{t-1}\left\vert \frac{1}{m+1}\tau _{n_{s},m}f\left( x\right)
\right\vert \\
&\leq &2K\sum_{\nu =0}^{n+m-1}\frac{1}{m+\nu +1}\sum_{s:n_{s+1}-n_{s}>\nu }%
\left[ F_{n_{s}-m+\nu ,m}\left( f,x\right) _{X^{p}}+F_{n_{s}-m+\nu ,\nu
}\left( f,x\right) _{X^{p}}\right] .
\end{eqnarray*}

Using the inequality%
\begin{equation*}
F_{n_{s+1}-m,\nu }\left( f,x\right) _{X^{p}}\leq \frac{1}{2}F_{n_{s}-m,\nu
}\left( f,x\right) _{X^{p}}\text{ \ \ for \ }\left\{ 
\begin{array}{c}
\nu =0,1,2,...,n_{s+1}-n_{s}-1 \\ 
s=0,1,2,...,t-2%
\end{array}%
\right.
\end{equation*}%
we obtain%
\begin{eqnarray*}
&&\sum_{s:n_{s+1}-n_{s}>\nu }\left[ F_{n_{s}-m+\nu ,m}\left( f,x\right)
_{X^{p}}+F_{n_{s}-m+\nu ,\nu }\left( f,x\right) _{X^{p}}\right] \\
&=&F_{n_{p}-m+\nu ,m}\left( f,x\right) _{X^{p}}+F_{n_{p}-m+\nu ,\nu }\left(
f,x\right) _{X^{p}} \\
&&+\sum_{s\geq p+1:n_{s+1}-n_{s}>\nu }\left[ F_{n_{s+1}-m+\nu ,m}\left(
f,x\right) _{X^{p}}+F_{n_{s+1}-m+\nu ,\nu }\left( f,x\right) _{X^{p}}\right]
\\
&\leq &F_{n_{p}-m+\nu ,m}\left( f,x\right) _{X^{p}}+F_{n_{p}-m+\nu ,\nu
}\left( f,x\right) _{X^{p}} \\
&&+\sum_{s:s\geq p+1}F_{n_{s}-m,m}\left( f,x\right) _{X^{p}}\left[
F_{n_{s}-m,m}\left( f,x\right) _{X^{p}}+F_{n_{s}-m,\nu }\left( f,x\right)
_{X^{p}}\right] \\
&\leq &F_{n_{p}-m+\nu ,m}\left( f,x\right) _{X^{p}}+F_{n_{p}-m+\nu ,\nu
}\left( f,x\right) _{X^{p}}+2\left[ F_{n_{p+1}-m,m}\left( f,x\right)
_{X^{p}}+F_{n_{p+1}-m,\nu }\left( f,x\right) _{X^{p}}\right] \\
&\leq &3\left[ F_{n_{p}-m+\nu ,m}\left( f,x\right) _{X^{p}}+F_{n_{p}-m+\nu
,\nu }\left( f,x\right) _{X^{p}}\right] \text{ ,}
\end{eqnarray*}%
where \ $p$ \ denote the smallest index \ $s$ \ having the property \ $%
n_{s+1}-n_{s}>\nu .$ Hence%
\begin{eqnarray*}
\sum_{s=0}^{t-1}\left\vert \frac{1}{m+1}\tau _{n_{s},m}f\left( x\right)
\right\vert &\leq &K\sum_{\nu =0}^{n+m-1}\frac{F_{n-m+\nu ,m}\left(
f,x\right) _{X^{p}}+F_{n-m+\nu ,\nu }\left( f,x\right) _{X^{p}}}{m+\nu +1} \\
&\leq &K\sum_{\nu =0}^{n}\frac{F_{n-m+\nu ,m}\left( f,x\right)
_{X^{p}}+F_{n-m+\nu ,\nu }\left( f,x\right) _{X^{p}}}{m+\nu +1}.
\end{eqnarray*}

and our proof follows.

$\blacksquare $

\subsection{Proof of Theorem 3}

The proof follows by the obvious inequality%
\begin{equation*}
\left\Vert E_{n}\left( f,x;\delta \right) _{C}\right\Vert _{C}\leq
E_{n}\left( f\right) _{C}\text{ .}
\end{equation*}%
$\blacksquare $


\begin{thebibliography}{9}
\bibitem{ABT} S. Aljan\v{c}i\v{c}, R. Bojanic and M. Tomi\'{c}, On the
degree of convergence of Fej\'{e}r-Lebesgue sums, L'Enseignement
Mathematique, Geneve, Tome XV (1969) 21-28.

\bibitem{BN} P. L. Butzer, R.J. Nessel, Fourier analysis and approximation,
Basel und Stuttgart 1971.

\bibitem{L} L. Leindler, Sharpening of Ste\v{c}kin's theorem to strong
approximation, Analysis Math. 16 (1990), 27-38.

\bibitem{WL} W. Lenski, Pointwise best approximation and de la Vall\'{e}%
e-Poussin means, submited.

\bibitem{S} S. B. Ste\v{c}kin, On the approximation of periodic functions by
de la Vall\'{e}e Poussin sums, Analysis Math. 4 (1978), 61-74.

\bibitem{TM} N. Tanovi\'{c}-Miller, On some generalizations of the Fej\'{e}%
r-Lebesgue theorem, Boll. Un. Mat. Ital. B(6) 1 (1982), no. 3, 1217-1233.
\end{thebibliography}
\end{document}